\documentclass[11pt]{article}
\usepackage{amsthm}
\usepackage{amsmath}
\usepackage{amssymb}
\usepackage[toc,page]{appendix}
\usepackage{color,enumitem}
\usepackage{bbm}

\allowdisplaybreaks

\makeatletter
\@addtoreset{equation}{section}

\makeatother

\makeatletter

\newdimen\mainfontsize \mainfontsize=1\@ptsize pt
\setlength{\oddsidemargin}{.25 true in}
\setlength{\textwidth}{14.85 true cm}

\topskip=\mainfontsize
\maxdepth=.4\mainfontsize
   \@maxdepth=\maxdepth

   \setlength{\headheight}{0pt}
\dimen0=2\mainfontsize
\setlength{\headsep}{0pt}
\setlength{\topmargin}{\mainfontsize}
\dimen0=2.5\mainfontsize
\setlength{\footskip}{\dimen0}

\setlength{\textheight}{22.39 true cm} 

\makeatother

\theoremstyle{plain}
\newtheorem{thm}{Theorem}[section]
\newtheorem{lem}[thm]{Lemma}

\theoremstyle{definition}
\newtheorem{defn}[thm]{Definition}
\newtheorem{exmp}[thm]{Example}

\theoremstyle{remark}
\newtheorem{rem}[thm]{Remark}
\newtheorem*{claima}{\bf Claim A}
\newtheorem*{claimb}{\bf Claim B}

\title{Well-posedness of a system of SDEs driven by jump random measures}
\author{Ying Jiao\footnote{Universit\' e Claude Bernard - Lyon 1, Institut de Science Financière et d'Assurances, 50 Avenue Tony Garnier, 69007 Lyon, France. Email: ying.jiao@univ-lyon1.fr.} $\,$ and Nikolaos Kolliopoulos\footnote{Peking University, Beijing International Centre for Mathematical Research, 100871 Beijing,  China.} \footnote{Carnegie Mellon University, Department of Mathematical Sciences, Pittsburgh, PA 15213, USA. Email: nkolliop@andrew.cmu.edu (corresponding author).} }


\begin{document}
\maketitle

%
%
%
%
%
%

\begin{abstract}
We establish well-posedness for a class of systems of SDEs with non-Lipschitz coefficients in the diffusion and jump terms and with two sources of interdependence: a monotone function of all the components in the drift of each SDE and the correlation between the driving Brownian motions and jump random measures. Pathwise uniqueness is derived by employing some standard techniques. Then, we use a comparison theorem along with our uniqueness result to construct non-negative, $L^1$-integrable c\`adl\`ag solutions as monotone limits of solutions to approximating SDEs, allowing for time-inhomogeneous drift terms to be included. Our approach allows also for a comparison property to be established for the solutions to the systems we investigate. The applicability of certain systems in financial modeling is also discussed. 

\end{abstract}

\section{Introduction}
In this paper, we are interested in {systems of stochastic differential equations (SDEs) with non-Lipschitz diffusion and jump terms and two sources of interdependence: one in the drifts of the equations and one in the correlated driving processes. These systems include multi-dimensional generalisations with mean-field interactions in the drift of the following one-dimensional jump SDE}  
\begin{equation}\label{lambda-root}
d\lambda_t = a \left( b  - \lambda_t  \right) dt + \sigma \sqrt{\lambda_t} dB_t
+\sigma_Z \sqrt[\alpha]{\lambda_{t-}}  dZ_t,  \quad t\geq 0 \end{equation}
where $a,b,\sigma,\sigma_Z\geq 0$, $B=(B_t,t\geq 0)$  is a Browinan motion and $Z=(Z_t,t\geq 0)$ is an independent spectrally positive $\alpha$-stable compensated L\'evy process with parameter $\alpha \in (1,2]$. The existence of unique strong solutions to jump SDEs with generally non-Lipschitz coefficients is obtained by Fu and Li \cite{FL2010} (see also Li and Mytnik \cite{LM2011}). Dawson and Li \cite{DL2006} consider and prove a more general integral representation of \eqref{lambda-root} in terms of CBI processes (continuous state branching processes with immigration) 
\begin{equation}\label{lambda-integral}
\lambda_t = \lambda_0 + a\int_0^t  \left( b  - \lambda_s  \right) ds + \sigma \int_0^t \int_0^{\lambda_s} W(ds,du)
+ \sigma_Z \int_0^t \int_0^{\lambda_{s-} } \int_{\mathbb{R}^+} \zeta \widetilde{N}
(ds,dv, d\zeta),
\end{equation}
where $W(ds,du)$ is a white noise on $\mathbb{R}_+^2$  with intensity $dsdu$ and $\widetilde N(ds,dv,d\zeta)$ is an independent compensated Poisson 
random measure on $\mathbb{R}_+^3$ with intensity $dsdv\mu(d\zeta)$, where $\mu(d\zeta)$ is a L\'evy measure on $\mathbb{R}_+$ satisfying  
$\int_0^\infty (\zeta\wedge\zeta^2)\mu(d\zeta)<\infty $.

The process given by \eqref{lambda-root} generalises the well-known 
Cox–Ingersoll–Ross (CIR) process and its applications in mathematical finance are studied by Jiao et al. \cite{JCS2017, JCSZ2021}.
The link between general CBI processes and the affine modeling framework is established by Filipovi\'c  \cite{Fil2001}. In a recent paper, Frikha and Li \cite{FL2021} studied the well-posedness and numerical approximation of a time-inhomogeneous jump SDE with generally non-Lipschitz coefficients and a drift term that involves the law of the solution, which can be viewed as the large-population limit of an individual particle evolving within a system with mean-field interaction (McKean-Vlasov SDE).  On one hand, assuming generally non-Lipschitz coefficients makes the well-posedness challenging since the classic iteration method fails to 
apply (see \cite{FL2010,DL2006,FL2021}). On the other hand, CIR-like processes are non-negative and ergodic so they constitute good candidates for financial modelling, see Fouque et al. \cite{FOU} for the very nice properties of some volatility models of this kind. A system of correlated SDEs with CIR volatility and absorption at $0$ has been studied by Hambly and Kolliopoulos \cite{HK2017,ERR, HK2019} in the framework of portfolio credit modeling.

In this paper, we focus on a system of finite number of jump SDEs where the drift term of each component {depends monotonically on other components of the system and constitutes one of the two sources of interaction, with the second one being the correlation among the driving processes, that is, the Brownian motions and jump random measures}. Such {systems of} SDEs with diffusion coefficients being non-Lipschitz {and a mean-field component in the drifts} can be adopted to model interacting financial quantities. We refer to works by Bo and Capponi \cite{BC2015}, Fouque and Ichiba \cite{FI2013} and Giesecke et al. \cite{GSS2013} for the mean-field modelling of large default-sensitive portfolios in credit and systemic risks. {In our setup the SDEs} contain a jump part driven by general random measures which allows to include various jump processes such as Poisson or compound Poisson processes, L\'evy processes or indicator default processes. 

The solutions to the equations constituting our multi-dimensional system are obtained as monotone limits of solutions to increasing sequences of approximating SDEs with with {an arbitrary c\`adl\`ag process $(b_t)_{t \in \left[0, T\right]}$ present in the drift}. The key element is an auxiliary lemma that establishes existence of solutions and a comparison property for the approximating equations. The proof of the auxiliary lemma is based on further monotone approximations by simpler SDEs where $(b_t)_{t \in \left[0, T\right]}$ is replaced by some piecewise constant process and thus the results of \cite{FL2010} become applicable, with the monotonicity and the comparison property derived by applying the comparison theorem of Gal'chuk \cite{Y1986} (see also Abdelghani and Melnikov). Establishing the comparison property requires also the limits of approximating sequences to be unique, which is a consequence of the uniqueness of solutions to the general multi-dimensional system. The latter is obtained in a second auxiliary lemma, the proof of which is almost identical to that of the uniqueness result in \cite{FL2010}. The solutions to our system are also positive since they are obtained as monotone limits of other positive processes. Finally, our method covers also systems with time-inhomogeneous drifts, where a martingale problem approach cannot be adopted, while it establishes also a comparison property for the unique solutions to those systems.

The rest of the paper is organized as follows. In Section \ref{sec:system}, we present our system of SDEs, the assumptions on the coefficients, {and an example of a possible application in financial modeling}. The main well-posedness result and its proof are given in Section \ref{sec:main result}.
The proofs of the auxiliary lemmas are left to 
the Appendix in Section \ref{sec: key lemma}.

\section{System of SDEs and assumptions}\label{sec:system}

We fix a filtered probability space $\left(\Omega, \mathcal{F}, \mathbb F= (\mathcal{F}_{t})_{t \geq 0}, \mathbb{P} \right)$ 
which satisfies the usual conditions. Let $U_0$ and $U_1$ be two locally compact and separable metric spaces, in which case they will also be $\sigma$-compact spaces. For $N\in\mathbb N$, we study the following system of SDEs
\begin{eqnarray}
\lambda_t^i &=& \lambda_0^i + a_i\int_{0}^{t}\left(b_{i}\left(s, \, \lambda_s^1, \, \lambda_s^2, \, ..., \, \lambda_s^N\right) - \lambda_s^i\right)ds + \int_0^t\sigma_i(\lambda_s^i)dW_s^i \nonumber \\
&& \,\quad + \int_0^{t}\int_{U_0}g_{i,0}\left(\lambda_{s-}^i, \, u\right)\tilde{N}_{i,0}\left(ds, \, du\right) \nonumber \\
&& \,\quad  + \int_0^{t}\int_{U_1}g_{i,1}\left(\lambda_{s-}^i, \, u\right)N_{i,1}\left(ds, \, du\right), \quad i \in \{1, 2, ..., N\}\label{mainsystem}
\end{eqnarray}
where  $\lambda_0^i\geq 0$, $a_i\geq 0$,  
$W^i=(W^i_t)_{t\geq 0}$ is an $\mathbb{F}$-adapted standard Brownian motion, $N_{i,0}\left(ds, \, du\right)$ and $N_{i,1}\left(ds, \, du\right)$ are the Poisson measures of {two} $\mathbb{F}$-adapted point processes 
$p_{i,0}: \Omega \times \mathbb R_+ \longrightarrow U_0$ and $p_{i,1}: \Omega \times \mathbb R_+ \longrightarrow U_1$
with compensator {measures} $\mu_{i,0}(du)dt$ and $\mu_{i,1}(du) dt$ respectively, and $\tilde{N}_{i,0}\left(ds, \, du\right) = N_{i,0}\left(ds, \, du\right) - \mu_{i,0}(du)dt$ is the compensated Poisson measure of $p_{i,0}$. Finally, we assume that $W^i,N_{i,0},N_{i,1}$ are independent for each fixed $i$, but we do not require $(W^i,N_{i,0},N_{i,1})$ to be mutually independent for different $i \in \{0, 1, ..., N\}$.  
\begin{exmp}\label{exmp}
Let $Z^i=(Z^i_t)_{t\geq 0}$ for $i \in \{0, \, 1, \, ..., \, N\}$ be pairwise independent spectrally positive $\alpha_i$-stable L\'evy processes with $\alpha_i\in(1,2]$ for each $i$, and $B^i = (B^i_t)_{t\geq 0}$ for $i \in \{0, \, 1, \, ..., \, N\}$ be pairwise independent standard Brownian motions which are also independent from each L\'evy process $Z^i$. Then, we have the following multi-dimensional generalization of \eqref{lambda-root}:
\begin{eqnarray}\label{examplesystem}
\lambda_t^i = \lambda_0^i &+& a_i\int_{0}^{t}\frac{1}{N}\sum_{j=1}^N\left(\lambda_s^j - \lambda_s^i\right)ds + \sigma_{i}\int_0^t\sqrt{\lambda_s^i}dB_s^i + \sigma_{0}\int_0^t\sqrt{\lambda_s^i}dB_s^0 \nonumber \\
&+& \sigma_{Z, i}\int_0^{t}\sqrt[\alpha_i]{\lambda_{s-}^i}dZ^i_s + \sigma_{Z, 0}\int_0^{t}\sqrt[\alpha_0]{\lambda_{s-}^i}dZ^0_s, \qquad i \in \{1, \, 2, \, ..., \, N\} \nonumber \\
\end{eqnarray}
where $a_i, \, \sigma_i, \, \sigma_{Z, i} \geq 0$ for all $i \in \{1, \, 2, \, ..., \, N\}$, $\sigma_0, \, \sigma_{Z, 0} \geq 0$, and $\alpha_0, \alpha_i \in (1,2] $. It is not hard to see that \eqref{examplesystem} is obtained as a special case of \eqref{mainsystem} where, for each $i \in \{1, \, 2, \, ..., \, N\}$, the drift function $b_i$ is given by $$b_i\left(s, \, x_1, \, x_2, \, ..., \, x_N\right)=\frac{1}{N}\sum_{j=1}^Nx_j$$ for all $(s, \, x_1, \, x_2, \, ..., \, x_N) \in \mathbb{R}_{+} \times \mathbb{R}^N$, $\tilde{N}_{i,0}(dt,d{u})$ for ${u}=(u_0,u_i)\in \mathbb{R}_{+}^2$ is the compensated L\'evy measure associated with the two-dimensional Levy process $\overline{Z}^i=(Z^0,Z^i)$, the Brownian motion $W^i$ is obtained as a linear combination of $B^i$ and $B^0$, i.e $$W^i = \frac{\sigma_i}{\sqrt{\sigma_i^2 + \sigma_0^2}}B^i + \frac{\sigma_0}{\sqrt{\sigma_i^2 + \sigma_0^2}}B^0,$$ and for $x \geq 0$ we have $$\sigma_i(x)= \sqrt{\sigma_i^2 + \sigma_0^2}\cdot x^{\frac{1}{2}}$$ and $$g_{i,0}(x, u)=\sigma_{Z,0}\cdot x^{\frac{1}{\alpha_0}} \cdot u_0+\sigma_{Z,i}\cdot x^{\frac{1}{\alpha_i}} \cdot u_i,$$ with $\sigma_i$, $\sigma_0$, $\sigma_{Z,0}$ and $\sigma_{Z,i}$ being positive real numbers.
Since each process $\lambda^i$ has only positive jumps which usually describe shocks due to negative phenomena, the above system is suitable for modeling default intensities, where the mean-field term in the drift of each equation captures interacting activities (e.g lending) which pull the corresponding intensity towards the market average. Both $B^0$ and $Z^0$ represent external factors which affect the whole market, while $B^i$ and $Z^i$ for $i \geq 1$ capture movements in the $i$-th intensity due to idiosyncratic factors. {The introduction of the jump external factor $Z^0$ allows for shocks due to major external events such as financial crisis or pandemics (like the recent Covid pandemic crisis) to be captured, as well as more common events which lead to more frequent and smaller jumps. Similarly, the processes $Z^i$ for $i \geq 1$ are associated to idiosyncratic shocks that lead to individual jumps. Observe also that jump terms of the more general form \eqref{lambda-integral} can also be included if $g_0$ takes the form $g_0(x, v,\zeta)=1_{\{v< x\}}\zeta$ where $u=(v,\zeta)\in\mathbb R_+^2$.}
\end{exmp}

We now impose some necessary conditions on the diffusion and jump terms of our system \eqref{mainsystem}, which will be crucial for proving our result. We start with the following definition

\begin{defn}\label{assumptions1}
We denote by $\operatorname{Assum1}(\sigma,g_0,g_1,N_0,N_1)$ the conjunction of the following conditions.
\begin{enumerate}[label=(\arabic*)]

    \item $\sigma:\mathbb{R}\rightarrow\mathbb R$ is a continuous function which is either bounded or increasing on $\mathbb{R}_{+}$, and it holds that $\sigma(x) = 0$ for $x \leq 0$.

    \item $N_0$ is the Poisson random measure of an $\mathbb{F}$-adapted point process $p_0$ with a locally finite characteristic measure $\mu_0$, the function $g_0:\mathbb R\times U_0\rightarrow\mathbb R$ is Borel-measurable, and the following are satisfied:
    \begin{enumerate}[label=(\roman*)]
    \item for each fixed $u\in U_0$, {the function} $g_0(\cdot,u):x\mapsto g_0(x,u)$ is increasing and satisfies the inequality $g_0(x,u)+x\geq 0$ when $x\geq 0$ and the equality $g_0(x,u)=0$ when $x\leq 0$,
    \item the function
    $x\longmapsto\int_{U_0}|g_0(x,u)|\wedge|g_0(x,u)|^2\,\mu_0(du)$
    is locally bounded,

    \end{enumerate}
    \item $N_1$ is the Poisson random measure of an $\mathbb{F}$-adapted point process $p_1$ with a characteristic measure $\mu_1$, the function $g_1:\mathbb R\times U_1\rightarrow\mathbb R$ is Borel-measurable, and the following are satisfied:
    \begin{enumerate}[label=(\roman*)]
    \item for each fixed $u\in U_1$, {the function} $g_1(\cdot,u):x\mapsto g_1(x,u)$ is increasing and satisfies the inequality $g_1(x,u)+x\geq 0$ for every $x \in \mathbb{R}$ 
    \item there exists a Borel set $U_2 \subset U_1$ with $\mu_{1}\left(U_1 \backslash U_2\right) < +\infty$ such that the function
    \begin{eqnarray}\label{bigjumpsbound}
     x\longrightarrow\int_{U_2}\left|g_{1}(x,u)\right|\mu_{1}(du)\end{eqnarray}
    grows at most linearly as $x\rightarrow+\infty$,
    \end{enumerate}
    
    \item There exist positive and increasing functions $x \longrightarrow \rho(x)$ and $x \longrightarrow r_{m}(x)$ for each $m \in \mathbb{N}$, all defined on $\mathbb R_+$, with
    \begin{equation}\label{equ: holder like}
    \int_{0}^{x}\frac{dz}{\rho^2(z)} = +\infty \quad \text{and} \quad \int_{0}^{x}\frac{dz}{r_{m}(z) + z} = +\infty \quad \text{for} \quad m \in \mathbb{N}
    \end{equation}
    for any $x > 0$, such that:
    \begin{enumerate}[label=(\roman*)]
    \item $|\sigma(x)-\sigma(y)| \leq \rho(|x-y|)$ for all $x,y\geq 0$.
    \item $\left|g_{j}\left(x,u\right) - g_{j}\left(y,u\right)\right| \leq \rho(|x-y|)G_j(u)$ for all $x,y\geq 0$ and $u \in U_j$, for a function $u \longrightarrow G_j(u)$ defined on $U_j$ with
    \begin{eqnarray}
    \int_{U_j} G_j^2(u)\mu_j(du) < +\infty,   
    \end{eqnarray}
    for $j \in \{0, 1\}$.
    \item For each $m \in \mathbb{N}$, $r_m(\cdot)$ is a concave function and it holds that
    \begin{eqnarray}\label{lastest}
    \int_{U_2}\left|g_{1}\left(x,u\right)\wedge m - g_{1}\left(y,u\right)\wedge m\right|\mu_{1}(du) \leq r_{m}(|x - y|) 
    \end{eqnarray}
    for all $0 \leq x, \, y \leq m$, where $U_2$ is the same set as in (ii) of (3).
    \end{enumerate}

    \item For each $\bar{x} \geq 0$, there exist $U_{0,\bar{x}} \subset U_0$ and $U_{1,\bar{x}} \subset U_1$ such that for $j \in \{0, 1\}$ we have that $\mu_j(U_j \backslash U_{j,\bar{x}}) = 0$ and the function $x \longrightarrow g_{j}(x, u_j)$ is continuous at $x = \bar{x}$ for all $u_j \in U_{j,\bar{x}}$.
\end{enumerate}
\end{defn}


\begin{rem}\label{assumrm2}
This paper builds on the existence result \cite[Theorem 5.1]{FL2010}, which we will see that it only needs to be applied on jump SDEs with linear reversion towards a positive mean in the drift, i.e equations of the form
\begin{eqnarray}\label{SDEform}
x(t) = x(0) &+& a\int_{0}^{t}\left(b - x(s)\right)ds + \int_0^t\sigma(x(s))dW_s \nonumber \\
&+& \int_0^{t}\int_{U_1}g_{1}\left(x(s-), \, u\right)N_{1}\left(ds, \, du\right) + \int_0^{t}\int_{U_0}g_{0}\left(x(s-), \, u\right)\tilde{N}_{0}\left(ds, \, du\right). \nonumber \\
\end{eqnarray}
We can verify that the conditions required by \cite{FL2010} for the above are all contained in $\operatorname{Assum1} (\sigma,g_{0},g_{1},N_{0},N_{1})$. In particular:
\begin{itemize}
\item (2.b) from \cite{FL2010} follows immediately from the boundedness and/or monotonicity of the function $\sigma$ and the monotonicity of the function $g_0$.
\item Since $U_0$ is separable and locally compact, it is also a $\sigma$-compact space, so since $\mu_0$ is a locally finite measure, we can verify (5.b) from \cite{FL2010} by taking $V_1, \, V_2, \, \ldots$ to be compact subsets of $U_0$ with $\cup_{i=1}^nV_i = U_0$ and by applying Holder's inequality on the integral in (ii) of (2) in $\operatorname{Assum1} (\sigma,g_{0},g_{1},N_{0},N_{1})$.
\item Given that $U_0$ is separable and locally compact, the converse of the above is also true: if (5.b) from \cite{FL2010} is satisfied, any compact subset $K$ of $U_0$ is contained in some $V_i$, so we have $\mu_0(K) \leq \mu_0(V_i) < \infty$ and thus $\mu_0$ is locally finite. This indicates that when $U_0$ is a finite-dimensional Banach space, which will be the case in most natural applications, our assumptions are not much stronger than those in \cite{FL2010}. 
\end{itemize}
\end{rem}

\begin{rem}\label{assumrm3}
The second main tool used in this paper is the comparison theorem of Gal’chuk \cite[Theorem 1]{Y1986}, which needs to be applied on pairs of jump SDEs of the form \eqref{SDEform} that differ only at the mean $b$ and at the initial condition $x(0)$, that is
\begin{eqnarray}\label{SDEsforcompar}
x_i(t) = x_i(0) &+& a\int_{0}^{t}\left(b_i - x_i(s)\right)ds + \int_0^t\sigma(x_i(s))dW_s \nonumber \\
&+& \int_0^{t}\int_{U_0}g_{0}\left(x_i(s-), \, u\right)\tilde{N}_{0}\left(ds, \, du\right) + \int_0^{t}\int_{U}g_{1}\left(x_i(s-), \, u\right)N_{1}\left(ds, \, du\right), \nonumber \\
\end{eqnarray}
for $i \in \{1,2\}$, with $b_2 > b_1$ and $x_2(0) \geq x_1(0)$. Equations \eqref{SDEsforcompar} can be brought to the form of equation (1) in \cite{Y1986} with $X^i = x_i(\cdot)$, driving processes $a_t = t$ and $m_{t} = W_{t}$ for all $t \geq 0$, 
the counting measure $\mu$ defined as
\begin{eqnarray}
\mu([0, t], \cdot) = \sum_{0 \leq s \leq t}\mathbbm{1}_{\{(\Delta_s p_0, \, \Delta_s p_1) \neq (0, 0)\}}\delta_{(\Delta_s p_0, \, \Delta_s p_1)}(\cdot) \quad \text{for} \quad \Delta_s p_j = p_j(s) - p_j(s-)\nonumber
\end{eqnarray}
with intensity
\begin{eqnarray}
\nu(dt, du) = dt \times (\delta_{0}(du_0)\times \mu_1(du_1) + \mu_0(du_0) \times \delta_{0}(du_1)) \quad \text{for} \quad u = (u_0, u_1), \nonumber
\end{eqnarray}
the coefficient functions given by
\begin{eqnarray}
f^i(X^i) &=& a(b_i - X^i) + \int_{U_1}\mathbbm{1}_{\{u_1 \leq 1\}}g_1(X^i, u_1)\mu_1(du_1) - \int_{U_0}\mathbbm{1}_{\{u_0 > 1\}}g_0(X^i, u_0)\mu_0(du_0), 
\nonumber \\
g(X^i) &=& \sigma(X^i), \nonumber \\
h(u, X^i) &=& h^i(u, X^i) \,\,\,\, = \,\,\,\, \mathbbm{1}_{\{u_0 \neq 0\}}g_0(X^i, u_0) + \mathbbm{1}_{\{u_1 \neq 0\}}g_1(X^i, u_1) \quad \text{for} \quad u = (u_0, u_1), \nonumber
\end{eqnarray}
and finally the measure $p$ (not to be confused with the point processes $p_0$ and $p_1$ defined in this paper) being the zero measure so we can take e.g {$\ell^1(X^1) = 1$}, $\ell^2(X^2) = 2$ and $k^1(X^1) = k^2(X^2) = 0$. We can now verify that the conditions required by \cite{Y1986} for applying the comparison theorem to deduce that $x_2(t) \geq x_1(t)$ for all $t$ are also contained in $\operatorname{Assum1} (\sigma,g_{0},g_{1},N_{0},N_{1})$. Specifically:
\begin{itemize}
\item The continuity of $H_2$ in \cite{Y1986} can be derived from (5) of $\operatorname{Assum1} (\sigma,g_{0},g_{1},N_{0},N_{1})$, while $H_3$ and $H_4$ are obtained from (i) and (ii) of (4) in $\operatorname{Assum1} (\sigma,g_{0},g_{1},N_{0},N_{1})$ with
\[H(s,u) = \mathbbm{1}_{\{u_0 \neq 0\}}G_0(u_0) + \mathbbm{1}_{\{u_1 \neq 0\}}G_1(u_1) \quad \text{for} \quad u = (u_0, u_1)\]
\item $H_5$ and $H_6$ of \cite{Y1986} are satisfied by the monotonicity conditions in (i) of (2) and (3) in $\operatorname{Assum1} (\sigma,g_{0},g_{1},N_{0},N_{1})$ and by the choice of $k^i$ and $\ell^i$, while the conditions in $H_7$ are straightforward to verify.
\item To verify $H_8$ of \cite{Y1986} with $\rho(\cdot)$ given in (4) of $\operatorname{Assum1} (\sigma,g_{0},g_{1},N_{0},N_{1})$, we observe first that the sequence $\{a_n\}_{n \in \mathbb{N}}$ can be constructed due to \eqref{equ: holder like}. Next, considering uncountably many pairwise disjoint perturbations of $\{a_n\}_{n \in \mathbb{N}}$ which have the same property as the original sequence, some will consist of continuity points of $\rho(\cdot)$ since monotone functions have countably many discontinuities, which means that we can take each $a_n$ to be a continuity point of $\rho(\cdot)$. Then, it is sufficient to pick each term of $\{\epsilon_n\}_{n \in \mathbb{N}}$ sufficiently close to $0$.
\end{itemize}
\end{rem}

\begin{rem}
The conditions imposed on $\sigma$ and $g_0$ by (4) of $\operatorname{Assum1} (\sigma,g_{0},g_{1},N_{0},N_{1})$ can be compared to the H\" older condition in \cite{FL2021}. They are satisfied if $\sigma$ and $g_0$ are (uniformly) $\alpha$-H\" older continuous in $x$ for some $\alpha\in[1/2, 1]$. In that case, one can take $\rho(x) = \sqrt[\alpha]{x}$ which is also continuous, meaning that the complex argument used to verify the condition $H_8$ of \cite{Y1986} in the previous remark is not needed. Observe also that the systems of Example~\ref{exmp} fall into this case.
\end{rem}

\begin{rem}\label{assumrm1}
The conditions imposed by (5) of $\operatorname{Assum1} (\sigma,g_{0},g_{1},N_{0},N_{1})$ are also satisfied by the systems in Example~\ref{exmp}. In particular, the case $g_0(x, u)= g_0(x, v, \zeta) = 1_{\{v< x\}}\zeta$ is covered since for each $\bar{x} \in \mathbb{R}_{+}$ there is a jump at $x = \bar{x}$ only when $v = \bar{x}$.
\end{rem}

\noindent Next, the drift functions $b_i$ must also satisfy some continuity and growth conditions. These are given in the following definition

\begin{defn}\label{assumptionsfinal} For a function $b: \mathbb{R}_{+} \times \mathbb{R}^N \longrightarrow \mathbb{R}$, we say that $\operatorname{Assum2} (b)$ is satisfied when:
\begin{enumerate}
\item For any $(x_1, \, x_2, \, \ldots, \, x_N) \in \mathbb{R}_{+}^N$, the function $b(\cdot, \, x_1, \, x_2, \, \ldots, \, x_N): \mathbb{R}_{+} \longrightarrow \mathbb{R}$ is c\`adl\`ag (right continuous with left limits).
\item For any $t \geq 0$, the function $b(t, \cdot, \, \cdot, \, \ldots, \, \cdot): \mathbb{R}^N \longrightarrow \mathbb{R}$ is Lipschitz continuous and increasing in each of its $N$ arguments.
\item For any $T \geq 0$, there exist constants $B_T, L_T > 0$ such that $$b(t, \, x_1, \, x_2, \, \ldots, \, x_N) \leq B_T + L_T(x_1 + x_2 + \ldots + x_N)$$ for all $t \in [0, \, T]$ and all $(x_1, \, x_2, \, \ldots, \, x_N) \in \mathbb{R}_{+}^N$.
\end{enumerate}
\end{defn}

\section{Existence and pathwise uniqueness of the solution}
\label{sec:main result}
The main result of this paper is given in the following Theorem.

\begin{thm}\label{thm:1}Consider the system of SDEs \eqref{mainsystem} and suppose that for every index $i \in \{1,\cdots, N\}$ the following conditions hold: 
\begin{enumerate}[label=\rm(\arabic*)]
\item  $a_i > 0$ and $b_i(t, \, x_1, \, \ldots, \, x_N) \geq 0$ for all $(t, \, x_1, \, \ldots, \, x_N) \in \mathbb{R}_{+}^{N+1}$.
\item $\operatorname{Assum1} (\sigma_i,g_{i,0},g_{i,1},N_{i,0},N_{i,1})$ and $\operatorname{Assum2}(b_i)$ are satisfied.
\end{enumerate}
Then, \eqref{mainsystem} has a pathwise unique $\mathbb{F}$-adapted c\`adl\`ag solution $(\lambda^1_t,\cdots,\lambda^N_t)_{t\geq 0}$, with each $\lambda^i_{\cdot}$ being non-negative and satisfying the integrability condition $\mathbb E[\int_0^T\lambda_t^i\,dt]<+\infty$ for any $T\geq 0$. Moreover, denoting by $(\lambda^{1}_{j,t},\cdots,\lambda^{N}_{j,t})_{t\geq 0}$ the unique $\mathbb{F}$-adapted c\`adl\`ag solution to \eqref{mainsystem} but with each $b_i$ replaced by a different function $b_{i,j}$, where for $i \in \{1, \, \ldots, \, N\}$ and $j \in \{1, 2\}$ we have
\[
b_{i,1}(t, \, x_1, \, \ldots, \, x_N) \geq b_{i,2}(t, \, x_1, \, \ldots, \, x_N) \quad \text{for all} \quad (t, \, x_1, \, \ldots, \, x_N) \in \mathbb{R}_{+}^{N+1} \]
and
\[\lambda_{1,0}^{i} \geq \lambda_{2,0}^{i},
\]
it holds that $\lambda_{1,t}^{i} \geq \lambda_{2,t}^{i}$ for all $t \geq 0$ and $i \in \{1, \, \ldots, \, N\}$.
\end{thm}

\noindent The proof of the existence result contained in the above theorem relies heavily on the existence of solutions and a comparison property for a class of one-dimensional SDEs, where the proof of the latter requires the pathwise uniqueness of solutions. Therefore, we start by presenting the uniqueness result as a first lemma, and then we present the main auxiliary lemma which contains the existence result and the comparison property for the one-dimensional SDEs.

\begin{lem}[Only Uniqueness]\label{Uniqthm}
Under the assumptions of Theorem~\ref{thm:1}, there is at most one $\mathbb{F}$-adapted c\`adl\`ag solution $\left(\lambda_{\cdot}^1, \, \lambda_{\cdot}^2, \, ..., \, \lambda_{\cdot}^N\right)$ to the system \eqref{mainsystem}.
\end{lem}

\begin{lem}\label{basiclemma}
Let $T>0$. Consider the SDE
\newcommand*{\myTagFormat}[2]{$(\ref{#1})_{#2}$}
\refstepcounter{equation}\label{auxSDE}
\begin{equation}\begin{aligned}
Y_t = Y_0 &+ a\int_{0}^{t}\left(b_s - Y_s\right)ds + \int_0^t\sigma(Y_s)dW_s \nonumber \\
&+ \int_0^{t}\int_{U_1}g_{1}\left(Y_{s-}, \, u\right)N_{1}\left(ds, \, du\right)  + \int_0^{t}\int_{U_0}g_{0}\left(Y_{s-}, \, u\right)\tilde{N}_{0}\left(ds, \, du\right)
\end{aligned}
\tag*{\myTagFormat{auxSDE}{b}}
\label{auxSDEb}
\end{equation}
for $t \in \left[0, \, T\right]$, where $a>0$ and $b=(b_t)_{t\in[0,T]}$ is a non-negative $\mathbb{F}$-adapted c\`adl\`ag process. Suppose that $\operatorname{Assum1} (\sigma,g_0,g_1,N_0,N_1)$ is satisfied by the components $(\sigma,g_0,g_1,N_0,N_1)$. Then, $\eqref{auxSDE}_{b}$ has a non-negative $\mathbb{F}$-adapted c\`al\`ag solution $Y=(Y_{t})_{t \in \left[0, \, T\right]}$. Moreover, if $b^1=(b^1_t)_{t\in[0,T]}$ and $b^2=(b^2_t)_{t\in[0,T]}$ are two non-negative $\mathbb{F}$-adapted c\`adl\`ag processes and $(Y^i)_{t \in [0, \, T]}$ is a solution to $\eqref{auxSDE}_{b^i}$ for $i \in \{1, 2\}$, the conditions $Y_0^1 \geq Y_0^2$ and $b_t^1 \geq b_t^2$ for all $t \in [0, \, T]$ (almost surely) imply that $$Y^1_t \geq Y^2_t$$ for all $t \in [0, \, T]$ (also almost surely).
\end{lem}

\begin{rem}
 Note that in the above equation $\eqref{auxSDE}_{b}$, the symbol ``$b$'' is attached as a subscript to its label in order to emphasize the dependence of the equation on the drift coefficient process $b$.  The process $b$ could be replaced by  some particular processes in the following and the subscript will be changed accordingly.   
\end{rem}


We proceed now to the proof of Theorem \ref{thm:1}, were Lemmas~\ref{Uniqthm} and \ref{basiclemma} will be used. 

\begin{proof}[Proof of Theorem 3.1] Without loss of generality, we show that the equation admits a solution $(\lambda^1_t,\cdots,\lambda^N_t)$ for ${t\in [0,T]}$, with $\lambda^i_t$ non-negative and $\mathbb E[\int_0^T\lambda_t^i\,dt]<+\infty$ for a given $T> 0$. Then the solution can be extended to $\mathbb R_+$ without difficulty.  \bigskip  

\noindent{\bf Step 1.} {\it Construction of the approximating systems and monotonicity.} For  $n \in \mathbb{N}$, we construct a partition $0 = t^n_0 < t^n_1 < \ldots < t^n_{2^{n-1}} = T$ of $\left[0, \, T\right]$ as follows: We start with $t^1_0 = 0$ and $t^1_1 = T$ and, for any integer $n$,  define inductively $t^{n+1}_{2j} = t^n_j$ for all $j\in\{0,\ldots,2^{n-1}\}$ and $t^{n+1}_{2j+1} = {(t^n_{j} + t^n_{j+1})}/{2}$ for all $j\in\{0,\ldots,2^{n-1}-1\}$. Next, for each $i \in \{1, 2, ..., N \}$, let $\lambda_{\cdot}^{i,1}$ to be the non-negative solution to the SDE
\begin{eqnarray}
\lambda_t^{i,1} = \lambda_0^{i} &-&a_i\int_{0}^{t}\lambda_s^{i,1}ds + \int_0^t\sigma_i(\lambda_s^{i,1})dW_s^i \nonumber \\
&+& \int_0^{t}\int_{U_1}g_{i,1}\left(\lambda_{s-}^{i,1}, \, u\right)N_{i,1}\left(ds, \, du\right)  + \int_0^{t}\int_{U_0}g_{i,0}\left(\lambda_{s-}^{i,1}, \, u\right)\tilde{N}_{i,0}\left(ds, \, du\right) \nonumber \\ \label{initialsystem} 
\end{eqnarray}
which exists by Theorem 5.1 in \cite{FL2010} (see also Remark~\ref{assumrm2} regarding the verification of the required conditions). Then, having $\lambda_{\cdot}^{i,n}$ defined for some $n \geq 1$ and all $i \in \{1, 2, ..., N \}$, we define:
\begin{eqnarray}
b^{i,n}_k = \inf_{s \in \left[t^n_k, \, t^{n}_{k+1}\right]}b_i(s, \, \lambda_s^{1,n}, \, \lambda_s^{2,n}, \, ..., \, \lambda_s^{N,n})\label{mean}
\end{eqnarray}
and $\lambda_{\cdot}^{i,n+1}$ in $\left[t^n_k, \, t^n_{k+1}\right]$ for any $k \in \{0, 1, ..., 2^{n-1} - 1\}$ by solving the SDE
\begin{eqnarray}\label{approxSDE}
\lambda_t^{i,n+1} = &&\lambda_{t^n_k}^{i,n+1} + a_i\int_{t^n_k}^{t}\left(\mathbb{E}\left[b^{i,n}_k | \mathcal{F}_{s}\right] - \lambda_s^{i,n+1}\right)ds + \int_{t^n_k}^t\sigma_i(\lambda_s^{i,n+1})dW_s^i \nonumber \\
&& + \int_{t^n_k}^{t}\int_{U_1}g_{i,1}\left(\lambda_{s-}^{i,n+1}, \, u\right)N_{i,1}\left(ds, \, du\right)  + \int_{t^n_k}^{t}\int_{U_0}g_{i,0}\left(\lambda_{s-}^{i,n+1}, \, u\right)\tilde{N}_{i,0}\left(ds, \, du\right) \nonumber \\ \label{recursion}
\end{eqnarray}
for $t \in \left[t^n_k, \, t^n_{k+1}\right]$, which also has a solution by Lemma \ref{basiclemma}.

\noindent We will show now that for any $n \geq 1$ we have $\lambda_t^{i,n+1} \geq \lambda_t^{i,n}$ for all $i \in \{1, 2, ..., N\}$ and all $t \in \left[0, \, T\right]$ by {induction on $n$}. For the {base} case, that is, $\lambda_t^{i,2} \geq \lambda_t^{i,1}$, we only need to recall that $\mathbb{E}[b^{i,n}_k | \mathcal{F}_{s}] \geq 0$  since   each $b_i$ in \eqref{mean} is a non-negative function, and then use the comparison property of Lemma~\ref{basiclemma}. Suppose now that for some $n \geq 1$ we have $\lambda_t^{i,n+1} \geq \lambda_t^{i,n}$ for all $i \in \{1, 2, ..., N\}$ and $t \in \left[0, \, T\right]$. Then, by the monotonicity of each $b_i$ we have
\begin{eqnarray}
b^{i,n+1}_{2k} &=& \inf_{s \in \left[t^{n+1}_{2k}, \, t^{n+1}_{2k+1}\right]}b_i(s, \, \lambda_s^{1,n+1}, \, \lambda_s^{2,n+1}, \, ..., \, \lambda_s^{N,n+1}) \nonumber \\
&\geq& \inf_{s \in \left[t^{n+1}_{2k}, \, t^{n+1}_{2k+1}\right]}b_i(s, \, \lambda_s^{1,n}, \, \lambda_s^{2,n}, \, ..., \, \lambda_s^{N,n}) \nonumber \\
&\geq& \inf_{s \in \left[t^{n+1}_{2k}, \, t^{n+1}_{2(k+1)}\right]}b_i(s, \, \lambda_s^{1,n}, \, \lambda_s^{2,n}, \, ..., \, \lambda_s^{N,n}) \nonumber \\
&=& \inf_{s \in \left[t^{n}_{k}, \, t^{n}_{k+1}\right]}b_i(s, \, \lambda_s^{1,n}, \, \lambda_s^{2,n}, \, ..., \, \lambda_s^{N,n}) \nonumber \\
&=& b^{i,n}_{k} \label{evenmean}
\end{eqnarray}
for $n \geq 1$, all $i \in \{1, 2, ..., N\}$ and all $k \in \{0, 1, ..., 2^{n-1} - 1\}$, and also
\begin{eqnarray}
b^{i,n+1}_{2k+1} &=& \inf_{s \in \left[t^{n+1}_{2k+1}, \, t^{n+1}_{2k+2}\right]}b_i(s, \, \lambda_s^{1,n+1}, \, \lambda_s^{2,n+1}, \, ..., \, \lambda_s^{N,n+1}) \nonumber \\
&\geq& \inf_{s \in \left[t^{n+1}_{2k+1}, \, t^{n+1}_{2k+2}\right]}b_i(s, \, \lambda_s^{1,n}, \, \lambda_s^{2,n}, \, ..., \, \lambda_s^{N,n}) \nonumber \\
&\geq& \inf_{s \in \left[t^{n+1}_{2k}, \, t^{n+1}_{2k+2}\right]}b_i(s, \, \lambda_s^{1,n}, \, \lambda_s^{2,n}, \, ..., \, \lambda_s^{N,n}) \nonumber \\
&=& \inf_{s \in \left[t^{n}_{k}, \, t^{n}_{k+1}\right]}b_i(s, \, \lambda_s^{1,n}, \, \lambda_s^{2,n}, \, ..., \, \lambda_s^{N,n}) 
\nonumber \\
&=& b^{i,n}_{k} \label{oddmean}
\end{eqnarray}
for $n \geq 1$, all $i \in \{1, 2, \ldots, N\}$ and all $k \in \{0, 1, \ldots, 2^{n-1} - 1\}$. We will use these two inequalities to show that $\lambda_t^{i,n+2} \geq \lambda_t^{i,n+1}$ for all $i \in \{1, 2, \ldots, N\}$ and $t \in \left[0, \, T\right]$. This is done by applying a second induction as follows: For $t \in [t^{n+1}_{0}, \, t^{n+1}_{1}] =[0, \, t^{n+1}_{1}] \subset [0, \, t^{n}_{1}]$ we have
\begin{eqnarray}
\lambda_t^{i,n+2} = \lambda_{0}^{i,n+2} &+& a_i\int_{0}^{t}\left(\mathbb{E}\left[b^{i,n+1}_0 | \mathcal{F}_s \right] - \lambda_s^{i,n+2}\right)ds + \int_0^t\sigma_i(\lambda_s^{i,n+2})dW_s^i \nonumber \\
&+& \int_0^{t}\int_{U_1}g_{i,1}\left(\lambda_{s-}^{i,n+2}, \, u\right)N_{i,1}\left(ds, \, du\right) \nonumber \\
&+& \int_0^{t}\int_{U_0}g_{i,0}\left(\lambda_{s-}^{i,n+2}, \, u\right)\tilde{N}_{i,0}\left(ds, \, du\right)\label{comparbase1}
\end{eqnarray}
and
\begin{eqnarray}
\lambda_t^{i,n+1} = \lambda_{0}^{i,n+1} &+& a_i\int_{0}^{t}\left(\mathbb{E}\left[b^{i,n}_0 | \mathcal{F}_s\right] - \lambda_s^{i,n+1}\right)ds  + \int_0^t\sigma_i(\lambda_s^{i,n+1})dW_s^i \nonumber \\
&+& \int_0^{t}\int_{U_1}g_{i,1}\left(\lambda_{s-}^{i,n+1}, \, u\right)N_{i,1}\left(ds, \, du\right) \nonumber \\
&+& \int_0^{t}\int_{U_0}g_{i,0}\left(\lambda_{s-}^{i,n+1}, \, u\right)\tilde{N}_{i,0}\left(ds, \, du\right)\label{comparbase2}
\end{eqnarray}
and since $\mathbb{E}[b^{i,n+1}_{0} | \mathcal{F}_s] \geq \mathbb{E}[b^{i,n}_{0} | \mathcal{F}_s]$ (by taking conditional expectations in \eqref{evenmean} for $k=~0$ and with $n$ replaced with $n+1$) the comparison property of Lemma~\ref{basiclemma} implies that $\lambda_t^{i,n+2} \geq \lambda_t^{i,n+1}$ for every $t \in[t^{n+1}_{0}, \, t^{n+1}_{1}] = [0, \, t^{n+1}_{1}]$. Suppose now that for some $k' \in~\{0, 1, \ldots, 2^{n} - 1\}$ we have $\lambda_t^{i,n+2} \geq \lambda_t^{i,n+1}$ for all $t \in [t^{n+1}_{0}, \, t^{n+1}_{k'}] = \left[0, \, t^{n+1}_{k'}\right]$. Then for $k' = 2k$ with $k \in \{0, 1, ..., 2^{n - 1} - 1\}$ we have $t^{n+1}_{k'} = t^{n}_{k}$ and $t^{n+1}_{k'+1} = (t^{n}_{k} + t^{n}_{k+1})/{2}$, while for $k' = 2k+1$ with $k \in \{0, 1, ..., 2^{n - 1} - 1\}$ we have $t^{n+1}_{k'} = {(t^{n}_{k} + t^{n}_{k+1})}/{2}$ and $t^{n+1}_{k'+1} = t^{n}_{k+1}$, so in both cases it holds that $[t^{n+1}_{k'}, t^{n+1}_{k'+1}] \subset [t^{n}_{k}, t^{n}_{k+1}]$ and for any $t \in [t^{n+1}_{k'}, t^{n+1}_{k'+1}]$ we have both
\begin{eqnarray}
\lambda_t^{n+2} = \lambda_{t^{n+1}_{k'}}^{i,n+2} &+& a_i\int_{t^{n+1}_{k'}}^{t}\left(\mathbb{E}\left[b^{i,n+1}_{k'} | \mathcal{F}_s \right] - \lambda_s^{i,n+2}\right)ds + \int_{t^{n+1}_{k'}}^t\sigma_i(\lambda_s^{i,n+2})dW_s^i \nonumber \\
&+& \int_{t^{n+1}_{k'}}^{t}\int_{U_1}g_{i,1}\left(\lambda_{s-}^{i,n+2}, \, u\right)N_{i,1}\left(ds, \, du\right) \nonumber \\
&+& \int_{t^{n+1}_{k'}}^{t}\int_{U_0}g_{i,0}\left(\lambda_{s-}^{i,n+2}, \, u\right)\tilde{N}_{i,0}\left(ds, \, du\right)\label{comparinduct1}
\end{eqnarray}
and
\begin{eqnarray}
\lambda_t^{n+1} = \lambda_{t^{n+1}_{k'}}^{i,n+1} &+& a_i\int_{t^{n+1}_{k'}}^{t}\left(\mathbb{E}\left[b^{i,n}_{k} | \mathcal{F}_s \right] - \lambda_s^{i,n+1}\right)ds  + \int_{t^{n+1}_{k'}}^t\sigma_i(\lambda_s^{i,n+1})dW_s^i \nonumber \\
&+& \int_{t^{n+1}_{k'}}^{t}\int_{U_1}g_{i,1}\left(\lambda_{s-}^{i,n+1}, \, u\right)N_{i,1}\left(ds, \, du\right) \nonumber \\
&+& \int_{t^{n+1}_{k'}}^{t}\int_{U_0}g_{i,0}\left(\lambda_{s-}^{i,n+1}, \, u\right)\tilde{N}_{i,0}\left(ds, \, du\right)\label{comparinduct2}
\end{eqnarray}
with $\mathbb{E}\left[b^{i,n+1}_{k'} | \mathcal{F}_s\right] \geq \mathbb{E}\left[b^{i,n}_{k} | \mathcal{F}_s\right]$ (by taking expectations given $\mathcal{F}_s$ in \eqref{evenmean} and \eqref{oddmean}). Thus, the comparison theorem implies that $\lambda_t^{i,n+2} \geq \lambda_t^{i,n+1}$ for all $t \in \left[t^{n+1}_{k'}, \, t^{n+1}_{k'+1}\right]$, which means that the same inequality holds for all $t \in \left[t^{n+1}_{0}, \, t^{n+1}_{k'+1}\right] \equiv \left[0, \, t^{n+1}_{k'+1}\right]$. This completes the second induction and gives $\lambda_t^{i,n+2} \geq \lambda_t^{i,n+1}$ for all $t \in \left[0, \, T\right]$, and the last completes the initial induction giving $\lambda_t^{i,n+1} \geq \lambda_t^{i,n}$ for all $t \in \left[0, \, T\right]$ and all $n \geq 1$.\bigskip

\noindent{\bf Step 2.} {\it Finiteness of the monotone limits.} We have shown in the previous step that {the family of processes} $\{\lambda_t^{i,n}\}_{t \in [0,T]}$ is pointwise increasing in $n$, we will show that almost surely, $\displaystyle{\lim_{n\longrightarrow +\infty}\lambda_t^{i,n}}$ is finite for almost all $t \in \left[0, \, T\right]$ and every $i \in \{1,\,2,\,\ldots,\,N\}$. This will follow by Fatou's lemma if we can show that
\begin{eqnarray}
\sup_{1\leq i \leq N}\mathbb{E}\left[\int_0^T\lambda_t^{i,n}dt\right]
\end{eqnarray}
is bounded in $n \in \mathbb{N}$. For the last, recall that by $\operatorname{Assum2}(b_i)$ we have that
\begin{eqnarray}\label{lipineq}
b_i\left(s, \, \lambda_s^{1,n}, \, \lambda_s^{2,n}, \, ..., \, \lambda_s^{N,n}\right) \leq B_T + L_T\sum_{i=1}^{N}\lambda_s^{i,n}
\end{eqnarray}
for each $i \in \{1, \, 2, \, \ldots, \, N\}$ and $s \in \left[t^n_k, \, t^n_{k+1}\right] \subset [0, \, T]$, so replacing the LHS of the above by its infimum for $s \in \left[t^n_k, \, t^n_{k+1}\right]$ and then conditioning on $\mathcal{F}_{s}$ we obtain
\begin{eqnarray}
\mathbb{E}\left[b_k^{i,n}\, |\,\mathcal{F}_{s}\right] \leq B_T + L_T\sum_{i = 1}^{N}\lambda_s^{i,n}
\end{eqnarray}
for all $s \in \left[t^n_k, \, t^n_{k+1}\right]$ and $i \in \{1, \, 2, \, ..., \, N\}$. Plugging the above in \eqref{approxSDE}, localizing if needed, taking expectations and then supremum in $i$ and finally using \eqref{bigjumpsbound}, we can easily get
\begin{eqnarray}
\sup_{1 \leq i \leq N}\mathbb{E}\left[\lambda_t^{i,n+1}\right] \leq \sup_{1 \leq i \leq N}\mathbb{E}\left[\lambda_{t_k^n}^{i,n+1}\right] &+& \bar{a}B(t - t_k^{n}) + \bar{a}LN\int_{t_k^n}^t\sup_{1 \leq i \leq N}\mathbb{E}\left[\lambda_s^{i,n}\right]ds \nonumber \\
&+& K\int_{t_k^n}^t\left(\sup_{1 \leq i \leq N}\mathbb{E}\left[\lambda_s^{i,n}\right] + 1\right)ds
\end{eqnarray}
for $\displaystyle{\bar{a} := \sup_{1 \leq i \leq N}a_i}$, which can be written as
\begin{eqnarray}\label{lastsummand}
\sup_{1 \leq i \leq N}\mathbb{E}\left[\lambda_t^{i,n+1}\right] \leq \sup_{1 \leq i \leq N}\mathbb{E}\left[\lambda_{t_k^n}^{i,n+1}\right] &+& B'(t - t_k^{n}) + L'\int_{t_k^n}^t\sup_{1 \leq i \leq N}\mathbb{E}\left[\lambda_s^{i,n}\right]ds \nonumber \\
\end{eqnarray}
for $B' = \bar{a}B + K$ and $L' = \bar{a}LN + K$, so replacing $k$ with $k' < k$ and taking $t = t_{k'+1}^n$ we get also
\begin{eqnarray}\label{firstsummands}
\sup_{1 \leq i \leq N}\mathbb{E}\left[\lambda_{t_{k'+1}^n}^{i,n+1}\right] &\leq& \sup_{1 \leq i \leq N}\mathbb{E}\left[\lambda_{t_{k'}^n}^{i,n+1}\right] \nonumber \\
&& + \, B'(t_{k'+1}^n - t_{k'}^{n}) + L'\int_{t_{k'}^n}^{t_{k'+1}^n}\sup_{1 \leq i \leq N}\mathbb{E}\left[\lambda_s^{i,n}\right]ds.
\end{eqnarray}
Summing \eqref{lastsummand} with \eqref{firstsummands} for $k' \in \{0, \, 1, \, ..., \, k - 1\}$ we obtain
\begin{eqnarray}\label{be}
\sup_{1 \leq i \leq N}\mathbb{E}\left[\lambda_{t}^{i,n+1}\right] \leq \sup_{1 \leq i \leq N}\mathbb{E}\left[\lambda_{0}^i\right] + B't + L'\int_{0}^{t}\sup_{1 \leq i \leq N}\mathbb{E}\left[\lambda_s^{i,n}\right]ds
\end{eqnarray}
and since $k$ was arbitrary, the above holds for any $t \in \left[0, \, T\right]$. Take now a constant $M > 0$ such that $\displaystyle{\sup_{1 \leq i \leq N}\mathbb{E}\left[\lambda_{t}^{i,1}\right] \leq M}$ for all $t \in \left[0, \, T\right]$, which is possible by recalling the estimate $(2.5)$. Then, provided that $M$ is large enough, we will show by induction on $n$ that 
\begin{equation}\label{almostfinalbound}
\displaystyle{\sup_{1 \leq i \leq N}\mathbb{E}\left[\lambda_{t}^{i,n}\right] \leq Me^{L't}}
\end{equation}
for all $n \in \mathbb{N}$ and $t \in \left[0, \, T\right]$. The base case is trivial, and if $M$ is large enough such that $\displaystyle{M > \sup_{1 \leq i \leq N}\mathbb{E}\left[\lambda_{0}^i\right] + B'T}$, plugging $\displaystyle{\sup_{1 \leq i \leq N}\mathbb{E}\left[\lambda_{s}^{i,n}\right] \leq Me^{L's}}$ in \eqref{be} we find that
\begin{eqnarray}
\sup_{1 \leq i \leq N}\mathbb{E}\left[\lambda_{t}^{i,n+1}\right] &\leq& \sup_{1 \leq i \leq N}\mathbb{E}\left[\lambda_{0}^i\right] + B't + L'M\int_{0}^{t}e^{L's}ds \nonumber \\
&=& \sup_{1 \leq i \leq N}\mathbb{E}\left[\lambda_{0}^i\right] + B't + Me^{L't} - M 
\leq Me^{L't}
\end{eqnarray}
which completes the induction. Integrating then \eqref{almostfinalbound} for $t \in \left[0, \, T\right]$ we obtain the desired boundedness.\bigskip

\noindent {\bf Step 3.} {\it Showing that the limiting processes satisfy the system of SDEs.}
Now that we have the pointwise monotone convergence of the process $\{\lambda_{t}^{i,n}\}_{t \in \left[0, \, T\right]}$ to a finite process $\{\lambda_{t}^{i}\}_{t \in \left[0, \, T\right]}$ for every index $i \in \{1, \, 2, \, ..., \, N\}$, we will show that these limiting processes solve our system of SDEs. The first step is to fix an $i \in \{1, \, 2, \, ..., \, N\}$, and for each $n \in \mathbb{N}$ and all $s \in [0, \, T]$ take $k_n(s) \in \{1, \, 2, \, ..., \, 2^{n-1} - 1\}$ such that $s \in [t_{k_n(s)}^n, \, t_{k_n(s)+1}^n ]$. Obviously, if we take $s_n \in [t_{k_n(s)}^n, \, t_{k_n(s)+1}^n ]$ for all $n \in \mathbb{N}$, we will have $s_n \longrightarrow s$ as $n \longrightarrow +\infty$ since $$|s_n - s| \leq |t_{k_n(s)}^n - t_{k_n(s)+1}^n| = \mathcal{O}\left(2^{-n}\right).$$ Denoting by $D$ the set of points $s$ where $\lambda_{s}^{j,n}$ and $b_i\left(s, \, x_1, \, \ldots, \, x_N\right)$ are continuous in $s$ for all $j$ and $n$, for any $s \in D$ and an arbitrary $\epsilon > 0$ we have
\begin{eqnarray}
b_i(s_n, \, \lambda_{s_n}^{1,n}, \, \lambda_{s_n}^{2,n}, \, ..., \, \lambda_{s_n}^{N,n}) - \epsilon \leq b^{i,n}_{k_n(s)} \leq b_i(s, \, \lambda_{s}^{1,n}, \, \lambda_{s}^{2,n}, \, ..., \, \lambda_{s}^{N,n})
\end{eqnarray}
for some $s_n \in [t_{k_n(s)}^n, \, t_{k_n(s)+1}^n]$ (by the definition of infimum). For an $m \in \mathbb{N}$, recalling the pointwise monotonicity of each $\lambda_{\cdot}^{i, n}$ in $n \in \mathbb{N}$ and the monotonicity of each function $b_i$ in each of its last $N$ arguments, the previous double inequality easily gives
\begin{eqnarray}
b_i(s_n, \, \lambda_{s_n}^{1,m}, \, \lambda_{s_n}^{2,m}, \, ..., \, \lambda_{s_n}^{N,m}) - \epsilon \leq b^{i,n}_{k_n(s)} \leq b_i(s, \, \lambda_{s}^{1,n}, \, \lambda_{s}^{2,n}, \, ..., \, \lambda_{s}^{N,n})
\end{eqnarray}
for all $n \geq m$. Since $s \in D$, taking $n \longrightarrow +\infty$ in the above we obtain
\begin{eqnarray}
b_i(s, \, \lambda_{s}^{1,m}, \, \lambda_{s}^{2,m}, \, ..., \, \lambda_{s}^{N,m}) - \epsilon \leq \liminf_{n \longrightarrow +\infty}b^{i,n}_{k_n(s)} \leq \limsup_{n \longrightarrow +\infty}b^{i,n}_{k_n(s)} \leq b_i(s, \, \lambda_{s}^{1}, \, \lambda_{s}^{2}, \, ..., \, \lambda_{s}^{N}) \nonumber \\ .
\end{eqnarray}
Taking now $m \longrightarrow +\infty$ we get
\begin{eqnarray}
b_i(s, \, \lambda_{s}^{1}, \, \lambda_{s}^{2}, \, ..., \, \lambda_{s}^{N}) - \epsilon \leq \liminf_{n \longrightarrow +\infty}b^{i,n}_{k_n(s)} \leq \limsup_{n \longrightarrow +\infty}b^{i,n}_{k_n(s)} \leq b_i(s, \, \lambda_{s}^{1}, \, \lambda_{s}^{2}, \, ..., \, \lambda_{s}^{N}). \nonumber \\
\end{eqnarray}
and since $\epsilon > 0$ was arbitrary, the above implies that $$\displaystyle{\lim_{n \longrightarrow +\infty}b^{i,n}_{k_n(s)} = b_i(s, \, \lambda_{s}^{1}, \, \lambda_{s}^{2}, \, ..., \, \lambda_{s}^{N})},$$ where the convergence is obviously monotone. Next, for any $t \in \left[0, \, T\right]$, recalling \eqref{approxSDE} and that for all $k \in \{1, \, 2, \, ..., \, 2^{n-1} - 1\}$ we have $k = k_n(s)$ for all $s \in \left[t_{k}^n, \, t_{k+1}^n \right]$, for any $i \in \left\{1, \, 2, \, ..., \, N\right\}$ we can write
\begin{eqnarray}\label{af}
\lambda_t^{i,n+1} &=& \lambda_{0}^{i} + \sum_{k=0}^{k_n(t) - 1}\left(\lambda_{t_{k+1}^n}^{i,n+1} - \lambda_{t_{k}^n}^{i,n+1}\right) + \left(\lambda_{t}^{i,n+1} - \lambda_{t_{k_n(t)}^n}^{i,n+1}\right) \nonumber \\
&=& \lambda_{0}^{i} + a_i\sum_{k=0}^{k_n(t) - 1}\int_{t_k^n}^{t_{k+1}^n}\left(\mathbb{E}\left[b^{i,n}_{k_n(s)} | \mathcal{F}_{s}\right] - \lambda_s^{i,n+1}\right)ds \nonumber \\
&& \qquad + a_i\int_{t_{k_n(t)}^n}^{t}\left(\mathbb{E}\left[b^{i,n}_{k_n(s)} | \mathcal{F}_{s}\right] - \lambda_s^{i,n+1}\right)ds + \sum_{k=0}^{k_n(t) - 1}\int_{t_k^n}^{t_{k+1}^n}\sigma_i(\lambda_s^{i,n+1})dW_s^i \nonumber \\
&& \qquad + \int_{t_{k_n(t)}^n}^{t}\sigma_i(\lambda_s^{i,n+1})dW_s^i  + \sum_{k=0}^{k_n(t) - 1}\int_{t_k^n}^{t_{k+1}^n}\int_{U_1}g_{i,1}\left(\lambda_{s-}^{i, n+1}, \, u\right)N_{i,1}\left(ds, \, du\right) \nonumber \\
&& \qquad + \int_{t_{k_n(t)}^n}^{t}\int_{U_1}g_{i,1}\left(\lambda_{s-}^{i, n+1}, \, u\right)N_{i,1}\left(ds, \, du\right) \nonumber \\
&& \qquad + \sum_{k=0}^{k_n(t) - 1}\int_{t_k^n}^{t_{k+1}^n}\int_{U_0}g_{i,0}\left(\lambda_{s-}^{i, n+1}, \, u\right)\tilde{N}_{i,0}\left(ds, \, du\right) \nonumber \\
&& \qquad + \int_{t_{k_n(t)}^n}^{t}\int_{U_0}g_{i,0}\left(\lambda_{s-}^{i, n+1}, \, u\right)\tilde{N}_{i,0}\left(ds, \, du\right) \nonumber \\
&=& \lambda_{0}^{i} + a_i\int_{0}^{t}\left(\mathbb{E}\left[b^{i,n}_{k_n(s)} | \mathcal{F}_{s}\right] - \lambda_s^{i,n+1}\right)ds + \int_{0}^{t}\sigma_i(\lambda_s^{i,n+1})dW_s^i \nonumber \\
&& + \int_{0}^{t}\int_{U_1}g_{i,1}\left(\lambda_{s-}^{i, n+1}, \, u\right)N_{i,1}\left(ds, \, du\right)  + \int_{0}^{t}\int_{U_0}g_{i,0}\left(\lambda_{s-}^{i, n+1}, \, u\right)\tilde{N}_{i,0}\left(ds, \, du\right) \nonumber \\
\end{eqnarray}
and taking $n \longrightarrow +\infty$ in the above for all $i$ we derive the desired system of SDEs satisfied by the limiting processes $\left\{\lambda_{\cdot}^{i}: \, i \in \left\{1, \, 2, \, ..., \, N\right\}\right\}$. Indeed, since $\left[0, \, T\right] \slash D$ is obviously a countable random subset of $\left[0, \, T\right]$, the monotone convergence theorem gives
\begin{eqnarray}
\int_{0}^{t}\left(\mathbb{E}\left[b^{i,n}_{k_n(s)} | \mathcal{F}_{s}\right] - \lambda_s^{i,n+1}\right)ds &=& \int_{0}^{t}\mathbb{E}\left[b^{i,n}_{k_n(s)} | \mathcal{F}_{s}\right]ds - \int_{0}^{t}\lambda_s^{i,n+1}ds \nonumber \\
&\longrightarrow& \int_{0}^{t}\mathbb{E}\left[b_i(s, \,\lambda_{s}^{1}, \, \lambda_{s}^{2}, \, ..., \, \lambda_{s}^{N}) | \mathcal{F}_{s}\right]ds - \int_{0}^{t}\lambda_s^{i}ds \nonumber \\
&=& \int_{0}^{t}\left(b_i(s, \,\lambda_{s}^{1}, \, \lambda_{s}^{2}, \, ..., \, \lambda_{s}^{N}) - \lambda_s^{i}\right)ds,
\end{eqnarray}
uniformly in $t \in [0, T]$. Then, by the Burkholder-Davis-Gundy inequality (see \cite{COH}) we have 
\begin{eqnarray}
&&\mathbb{E}\left[\left(\sup_{t \in \left[0, \, T\right]}\left|\int_0^{t \wedge \tau^m}\sigma_i\left(\lambda_s^{i,n+1}\right)dW^{i}_s - \int_0^{t \wedge \tau^m}\sigma_i\left(\lambda_s^{i}\right)dW^{i}_s \right|\right)^2\right] \nonumber \\
&& \qquad  = \mathbb{E}\left[\left(\sup_{t \in \left[0, \, T\right]}\left|\int_0^{t \wedge \tau^m}\left(\sigma_i\left(\lambda_s^{i,n+1}\right) - \sigma_i\left(\lambda_s^{i}\right)\right)dW^{i}_s\right| \right)^2\right] \nonumber \\
&& \qquad \leq C\mathbb{E}\left[\int_0^{T \wedge \tau^m}\left(\sigma_i\left(\lambda_s^{i,n+1}\right) - \sigma_i\left(\lambda_s^{i}\right)\right)^2ds\right] \nonumber
\end{eqnarray}
and for $j \in \left\{0, 1\right\}$ 
\begin{eqnarray}
&&\mathbb{E}\Bigg[\Bigg(\sup_{t \in \left[0, \, T\right]}\Bigg|\int_0^{t \wedge \tau^m}\int_{U_j}g_{i,j}\left(\lambda_{s-}^{i,n+1}, \, u\right)\tilde{N}_{i,j}\left(ds, \, du\right) \nonumber \\
&& \qquad \qquad \qquad \qquad \qquad \qquad \qquad \qquad \quad - \int_0^{t \wedge \tau^m}\int_{U_j}g_{i,j}\left(\lambda_{s-}^{i}, \, u\right)\tilde{N}_{i,j}\left(ds, \, du\right) \Bigg|\Bigg)^2\Bigg] \nonumber \\
&& \qquad \quad = \mathbb{E}\Bigg[\Bigg(\sup_{t \in \left[0, \, T\right]}\Bigg|\int_0^{t \wedge \tau^m}\int_{U_j}\left(g_{i,j}\left(\lambda_{s-}^{i,n+1}, \, u\right) - g_{i,j}\left(\lambda_{s-}^{i}, \, u\right)\right)\tilde{N}_{i,j}\left(ds, \, du\right)\Bigg| \Bigg)^2\Bigg] \nonumber \\
&& \qquad \quad \leq C\mathbb{E}\Bigg[\int_0^{T \wedge \tau^m}\int_{U_j}\left(g_{i,j}\left(\lambda_{s-}^{i,n+1}, \, u\right) - g_{i,j}\left(\lambda_{s-}^{i}, \, u\right)\right)^2\mu_{i,j}\left(du\right)ds\Bigg] \nonumber
\end{eqnarray}
where we denote by $\lambda_{t-}^i$ the monotone limit of $(\lambda_{t-}^{i, n})_{n \in \mathbb{N}}$ as $n \longrightarrow +\infty$, with the sequence $\left\{\tau^m\right\}_{m \in \mathbb{N}}$ of stopping times selected as in the proof of Lemma~\ref{basiclemma} to ensure that the RHS in the last two estimates is finite for all $n \in \mathbb{N}$, and with these RHS tending to zero by the monotone pointwise convergence of $\lambda^{i,n}_{\cdot}$ to $\lambda^{i}_{\cdot}$, the continuity of $\sigma_i$ and $g_{i,j}$ ($\mu_{i,j}$ - almost everywhere for the second by (5) of $\operatorname{Assum1} (\sigma_i,g_{i,0},g_{i,1},N_{i,0},N_{i,1})$), and by the monotonicity or boundedness of these functions. Finally, by the monotonicity of $g_{i,1}$, working as for the convergence of the drifts we get 
\begin{eqnarray}
\int_0^{t \wedge \tau^m}\int_{U_1}g_{i,1}\left(\lambda^{i,n+1}_{s-}, \, u\right)\mu_{i,1}\left(du\right)ds \longrightarrow \int_0^{t \wedge \tau^m}\int_{U_1}g_{i,1}\left(\lambda^{i}_{s-}, \, u\right)\mu_{i,1}\left(du\right)ds
\end{eqnarray}
uniformly in $t \in \left[0, \, T\right]$ as $n \longrightarrow +\infty$, and combining this with the previous convergence result for the integral with respect to $\tilde{N}_{i,1}$ we deduce that uniformly in $t \in \left[0, \, T\right]$ we have
\begin{eqnarray}
\int_0^{t \wedge \tau^m}\int_{U_1}g_{i,1}\left(\lambda^{i,n+1}_{s-}, \, u\right)N_{i,1}\left(ds, \, du\right) \longrightarrow \int_0^{t \wedge \tau^m}\int_{U_1}g_{i,1}\left(\lambda^{i}_{s-}, \, u\right)N_{i,1}\left(ds, \, du\right) 
\end{eqnarray}
as $n \longrightarrow +\infty$. The desired system of SDEs is obtained by observing that almost surely we have $t = t \wedge \tau^m$ for large enough $m$, and that $(\lambda_{t-}^{i})_{t \in [0, T]}$ is the actual left limit process of $(\lambda_{t}^{i})_{t \in [0, T]}$ since the latter is the uniform limit of $(\lambda_{t}^{i, n})_{t \in [0, T]}$ as $n \longrightarrow +\infty$. 

\noindent {\bf Step 4.} {\it Uniqueness, positivity, integrability and comparison property.} We note now that for every $i$ and all $t \geq 0$ we have almost surely $\lambda_t^i \geq \lambda_t^{i,1}$ with $\lambda_t^{i,1}$ being non-negative, we can integrate \eqref{almostfinalbound} and use Fatou's lemma to deduce that $\lambda_{\cdot}^i$ is $L^1$ - integrable for each $i$, and there no other solution to our problem by Lemma~\ref{Uniqthm}. It remains to establish the comparison property of solutions, for which we only need to consider the approximating sequences $(\lambda_{1,\cdot}^{i, n})_{n \in \mathbb{N}}$ and $(\lambda_{2,\cdot}^{i, n})_{n \in \mathbb{N}}$ to $\lambda_{1,\cdot}^{i}$ and $\lambda_{2,\cdot}^{i}$ respectively and show that $\lambda_{1,t}^{i, n} \geq \lambda_{2,t}^{i, n}$ for all $t \in [0, T]$, all $i \in \{1, \, \ldots, \, N\}$ and all $n \in \mathbb{N}$. The latter can be done by induction on $n$. For $n = 1$, both $(\lambda_{1,t}^{i, 1})_{t \in [0, T]}$ and $(\lambda_{2,t}^{i, 1})_{t \in [0, T]}$ satisfy \eqref{initialsystem}, so since $\lambda_{1,0}^{i, 1} \geq \lambda_{2,0}^{i, 1}$, by the comparison property of Lemma~\ref{basiclemma} we get that $\lambda_{1,t}^{i, 1} \geq \lambda_{2,t}^{i, 1}$ for all $t \in [0, T]$ and all $i \in \{1, \, \ldots, \, N\}$. Then, supposing that $\lambda_{1,t}^{i, n} \geq \lambda_{2,t}^{i, n}$ for all $t \in [0, T]$, all $i \in \{1, \, \ldots, \, N\}$ and some $n \in \mathbb{N}$, for a fixed $i$ and $j \in \{1, 2\}$ we have that 
\begin{eqnarray}\label{twoapproxSDEs}
\lambda_{j,t}^{i,n+1}
&=& \lambda_{j, 0}^{i} + a_i\int_{0}^{t}\left(\mathbb{E}\left[b^{i,n}_{j,k_n(s)} | \mathcal{F}_{s}\right] - \lambda_{j,s}^{i,n+1}\right)ds + \int_{0}^{t}\sigma_i(\lambda_{j,s}^{i,n+1})dW_s^i \nonumber \\
&& + \int_{0}^{t}\int_{U_1}g_{i,1}\left(\lambda_{j,s-}^{i, n+1}, \, u\right)N_{i,1}\left(ds, \, du\right)  + \int_{0}^{t}\int_{U_0}g_{i,0}\left(\lambda_{j, s-}^{i, n+1}, \, u\right)\tilde{N}_{i,0}\left(ds, \, du\right) \nonumber \\
\end{eqnarray}
where
\begin{eqnarray}
b^{i,n}_{j,k} = \inf_{s \in \left[t^n_k, \, t^{n}_{k+1}\right]}b_i(s, \, \lambda_{j,s}^{1,n}, \, \lambda_{j,s}^{2,n}, \, ..., \, \lambda_{j,s}^{N,n}),
\end{eqnarray}
so the monotonicity of $b_i$ implies that $b^{i,n}_{1,k} \geq b^{i,n}_{2,k}$ for all $k \in \{0, \, 1, \, \ldots, \, 2^{n-1} - 1\} $ and thus the comparison property of Lemma~\ref{basiclemma} applied on equations \eqref{twoapproxSDEs} gives $\lambda_{1,t}^{i, n+1} \geq \lambda_{2,t}^{i, n+1}$ for all $t \in [0, T]$, which completes the induction. The proof of the theorem is now complete.
\end{proof}



\section{Appendix: Proofs of auxiliary lemmas}\label{sec: key lemma}
We prove here the two lemmas that were required for establishing Theorem~\ref{thm:1}. We present first a proof for Lemma~\ref{Uniqthm}, which is just a small modification of that of the uniqueness result in \cite{FL2010}. Then, we proceed to the proof of Lemma~\ref{basiclemma}, which requires Lemma~\ref{Uniqthm}.

\begin{proof}[Proof of Lemma 3.2]
Suppose that that the system \eqref{mainsystem} admits two different solutions $\big(\lambda_{\cdot}^{1,1}, \, \lambda_{\cdot}^{1,2}, \, ..., \, \lambda_{\cdot}^{1,N}\big)$ and $\big(\lambda_{\cdot}^{2,1}, \, \lambda_{\cdot}^{2,2}, \, ..., \, \lambda_{\cdot}^{2,N}\big)$. A localization similar to that in the proof of Proposition 2.2 in \cite{FL2010}, i.e working between the jumps of $\int_{0}^{t}\int_{U_1\backslash U_2}N_{i,1}(du, ds)$ for $i \in \{1, \, 2, \, ..., \, N\}$, allows us to assume that $U_1 = U_2$. Let now $\lambda_{\cdot}^{i} = \lambda_{\cdot}^{1,i} - \lambda_{\cdot}^{2,i}$ for each $i \in \{1, \, 2, \, ..., \, N\}$. The idea is to show that each $\lambda_{\cdot}^{i}$ is identically zero. We are going to construct a sequence $\left\{\phi_{k}\right\}_{k \in \mathbb{N}}$ of non-negative and twice continuously differentiable functions, which satisfies:
\begin{enumerate}
    \item $\phi_{k}(x) \longrightarrow |x|$ increasingly as $k \longrightarrow +\infty$.
    \item $0 \leq \phi_{k}'(x) \leq 1$ for $x \geq 0$ and $-1 \leq \phi_{k}'(x) \leq 0$ for $x \leq 0$.
    \item $\phi_{k}''(x) \geq 0$ for all $x \in \mathbb{R}$.
    \item As $k \longrightarrow +\infty$ we have the following two convergences for all $i \in \{1, \, 2, \, ..., \, N\}$, uniformly in $0 \leq x, \, y \leq m$ for any $m \geq 0$:
    \begin{enumerate}
        \item $\displaystyle{\phi_{k}''(x - y)\left[\sigma_i(x) - \sigma_i(y)\right]^2 \longrightarrow 0}$
        \item $\displaystyle{\int_{U_0}D_{g_{i,0}(x, u) - g_{i,0}(y, u)}\phi_{k}(x - y)\mu_{i,0}(du) \longrightarrow 0}$
\end{enumerate}
        where $D_{z}f(x) := f(x + z) - f(x) - zf'(x)$ for any $x, z \in \mathbb{R}$ and any function $f$ defined on a domain containing $x, x+z$ and differentiable at $x$. 

\end{enumerate}
The construction of $\left\{\phi_{k}\right\}_{k \in \mathbb{N}}$ is almost identical to that in the proof of \cite[Theorem 3.2]{FL2010}. First, we pick the sequence $\{a_k\}_{k \in \mathbb{N}}$ such that for each $k \geq 1$ we have $\int_{a_{k}}^{a_{k-1}}\frac{1}{\rho^2(x)}dx = k$, with $\rho(\cdot)$ given in (4) of $\operatorname{Assum1} (\sigma_i,g_{i,0},g_{i,1},N_{i,0},N_{i,1})$ (which can be taken the same for all $i \in \{1, \, 2, \, ..., \, N\}$). Next, for each $k$, we take a smooth function $x \longrightarrow \psi_{k}(x)$ supported in $\left(a_{k}, \, a_{k-1}\right)$ such that:
\begin{eqnarray}\label{boundrho}
0 \leq \psi_{k}(x) \leq \frac{2}{k}\frac{1}{\rho^2(x)} 
\end{eqnarray}
with $\int_{a_{k}}^{a_{k-1}}\psi_{k}(x)dx = 1$, and we define
\begin{eqnarray}
\phi_{k}(z) = \int_0^{|z|}\int_{0}^{y}\psi_{k}(x)dx
\end{eqnarray}
for $z \in \mathbb{R}$. The difference compared to the construction in the proof of \cite[Theorem~3.2]{FL2010} is that $\rho_m(x)$ is replaced by $\rho(x)$, so the whole construction is independent of $m$, but it can be verified in the same way that the functions $\phi_{k}$ here are non-negative, twice continuously differentiable, and they satisfy the first three of the four required conditions. Conditions 4(a) and 4(b) are verified almost exactly as the convergence part of (iii) and (iv) of \cite[Theorem~3.1]{FL2010} respectively in the proof of \cite[Theorem 3.2]{FL2010} (we just use (i) and (ii) of (4) in $\operatorname{Assum1} (\sigma_i,g_{i,0},g_{i,1},N_{i,0},N_{i,1})$ instead of (3.b) in \cite{FL2010}).  

Now we subtract the SDEs satisfied by $\lambda_{\cdot}^{1,i}$ and $\lambda_{\cdot}^{2,i}$ for each $i \in \{1, \, 2, \, ..., \, N\}$ to get
\begin{eqnarray}
\lambda_t^{i} = \lambda_t^{1,i} - \lambda_t^{2,i} &=&  a_i\int_{0}^{t}\left(b_{i}\left(s, \, \lambda_s^{1,1}, \, \lambda_s^{1,2}, \, ..., \, \lambda_s^{1,N}\right) - b_{i}\left(s, \, \lambda_s^{2,1}, \, \lambda_s^{2,2}, \, ..., \, \lambda_s^{2,N}\right)\right)ds \nonumber \\
&& \qquad - a_i\int_{0}^{t}\left(\lambda_s^{1,i} - \lambda_s^{2,i}\right)ds + \int_0^t\left(\sigma_i(\lambda_s^{1,i}) - \sigma_i(\lambda_s^{2,i})\right)dW_s^i \nonumber \\
&& \qquad + \int_0^{t}\int_{U_2}\left(g_{i,1}\left(\lambda_{s-}^{1,i}, \, u\right) - g_{i,1}\left(\lambda_{s-}^{2,i}, \, u\right) \right)N_{i,1}\left(ds, \, du\right) \nonumber \\
&& \qquad + \int_0^{t}\int_{U_0}\left(g_{i,0}\left(\lambda_{s-}^{1,i}, \, u\right) - g_{i,0}\left(\lambda_{s-}^{2,i}, \, u\right) \right)\tilde{N}_{i,0}\left(ds, \, du\right), \nonumber \\
\end{eqnarray}
we set $\tau_m^{i,j} = \inf\left\{t \geq 0: \lambda_t^{j,i} \geq m\right\}$ and $\tau_m = \inf\left\{\tau_m^{i,j}: \,\,(i, j) \in \{1,\,2\} \times \left\{1, \, 2, \, ..., \, N\right\}\}\right\}$, and we apply Ito's formula on $\phi_{k}\left(\lambda_{\cdot}^{i}\right)$ for each $i \in \{1, \, 2, \, ..., \, N\}$ to obtain
\begin{eqnarray}
\phi_{k}\left(\lambda_{t\wedge \tau_m}^{i}\right) &=&  a_i\int_{0}^{t\wedge \tau_m}\phi_{k}'\left(\lambda_{s}^{i}\right)\bigg[b_{i}\left(s, \, \lambda_s^{1,1}, \, ..., \, \lambda_s^{1,N}\right) - b_{i}\left(s, \, \lambda_s^{2,1}, \, ..., \, \lambda_s^{2,N}\right)\bigg]ds \nonumber \\
&& \quad - a_i\int_{0}^{t\wedge \tau_m}\phi_{k}'\left(\lambda_{s}^{i}\right)\left(\lambda_s^{1,i} - \lambda_s^{2,i}\right)ds \nonumber \\
&& \quad + \int_0^{t\wedge \tau_m}\phi_{k}'\left(\lambda_{s}^{i}\right)\left(\sigma_i(\lambda_s^{1,i}) - \sigma_i(\lambda_s^{2,i})\right)dW_s^i \nonumber \\
&& \quad + \frac{1}{2}\int_0^{t\wedge \tau_m}\phi_{k}''\left(\lambda_{s}^{i}\right)\left(\sigma_i(\lambda_s^{1,i}) - \sigma_i(\lambda_s^{2,i})\right)^2ds \nonumber \\
&& \quad + \int_0^{t\wedge \tau_m}\int_{U_2}\Delta_{\left[g_{i,1}\left(\lambda_{s-}^{1,i}, \, u\right) - g_{i,1}\left(\lambda_{s-}^{2,i}, \, u\right)\right]}\phi_{k}\left(\lambda_{s}^{i}\right)\mu_{i,1}\left(du\right)ds \nonumber \\
&& \quad + \int_0^{t\wedge \tau_m}\int_{U_0}D_{\left[g_{i,0}\left(\lambda_{s-}^{1,i}, \, u\right) - g_{i,0}\left(\lambda_{s-}^{2,i}, \, u\right)\right]}\phi_{k}\left(\lambda_{s}^{i}\right)\mu_{i,0}\left(du\right)ds + M_t, \nonumber \\
\end{eqnarray}
where $D_{z}f(x) := f(x + z) - f(x) - zf'(x)$ as defined earlier, $\Delta_{z}f(x) := f(x + z) - f(x)$, and $M_t$ is a martingale. This particular form of Ito's formula was used to derive display (3.3) in the proof of \cite[Theorem 3.1]{FL2010}. The martingale property of the components of $M_t$ can be seen e.g from the discussion in ~\cite[pages 212-213]{Kunita}, since for $j \in \{0, 1\}$ we can write
\begin{eqnarray}
&&\int_0^{t\wedge \tau_m}\int_{U_j}\Delta_{\left[g_{i,j}\left(\lambda_{s-}^{1,i}, \, u\right) - g_{i,j}\left(\lambda_{s-}^{2,i}, \, u\right)\right]}\phi_{k}\left(\lambda_{s}^{i}\right)\tilde{N}_{i,j}\left(ds, \, du\right) \nonumber \\
 &&\qquad = \int_0^{t}\int_{U_j}\mathbbm{1}_{\{s \leq \tau_m\}}\Delta_{\left[g_{i,j}\left(\lambda_{s-}^{1,i}, \, u\right) - g_{i,j}\left(\lambda_{s-}^{2,i}, \, u\right)\right]}\phi_{k}\left(\lambda_{s}^{i}\right)\tilde{N}_{i,j}\left(ds, \, du\right) \nonumber
\end{eqnarray}
and by the 1-Lipschitz property of $\phi_k$ and by (ii) of (4) in $\operatorname{Assum1} (\sigma_i,g_{i,0},g_{i,1},N_{i,0},N_{i,1})$ we can estimate
\begin{eqnarray}
&&\mathbb{E}\left[\int_0^{t}\int_{U_j}\left(\mathbbm{1}_{\{s \leq \tau_m\}}\Delta_{\left[g_{i,j}\left(\lambda_{s-}^{1,i}, \, u\right) - g_{i,j}\left(\lambda_{s-}^{2,i}, \, u\right)\right]}\phi_{k}\left(\lambda_{s}^{i}\right)\right)^2\mu_{i, j}\left(du\right)ds\right] \nonumber \\
 &&\qquad \leq t\rho^2\left(2m\right)\int_{U_j}G_{j}^2(u)\mu_{i, j}\left(du\right)ds \nonumber \\
 && \qquad < \infty \nonumber
\end{eqnarray}
Next, we take expectations on the above, and use the Lipschitz property of $b_i$ and that $\left|\phi_{k}'(x)\right| \leq 1$ to bound the expectations of the first two integrals by 
\begin{eqnarray}
\mathbb{E}\left[\int_{0}^{t\wedge \tau_m}L\sum_{j=1}^{N}\left|\lambda_{s}^{1,j} - \lambda_{s}^{2,j}\right|ds\right] &\leq& L\int_{0}^{t}\sum_{j=1}^{N}\mathbb{E}\left[\left|\lambda_{s\wedge \tau_m}^{1,j} - \lambda_{s\wedge \tau_m}^{2,j}\right|\right]ds \nonumber \\
&=& L\int_{0}^{t}\sum_{j=1}^{N}\mathbb{E}\left[\left|\lambda_{s\wedge \tau_m}^{j}\right|\right]ds
\end{eqnarray}
for some constant $L$, so we get the estimate
\begin{eqnarray}
\mathbb{E}\left[\phi_{k}\left(\lambda_{t\wedge \tau_m}^{i}\right)\right] &\leq&  L\int_{0}^{t}\sup_{1 \leq j \leq N}\mathbb{E}\left[\left|\lambda_{s\wedge \tau_m}^{j}\right|\right]ds \nonumber \\
&& \quad + \mathbb{E}\left[\frac{1}{2}\int_0^{t\wedge \tau_m}\phi_{k}''\left(\lambda_{s}^{i}\right)\left(\sigma_i(\lambda_s^{1,i}) - \sigma_i(\lambda_s^{2,i})\right)^2ds\right] \nonumber \\
&& \quad + \mathbb{E}\left[\int_0^{t\wedge \tau_m}\int_{U_2}\Delta_{\left[g_{i,1}\left(\lambda_{s-}^{1,i}, \, u\right) - g_{i,1}\left(\lambda_{s-}^{2,i}, \, u\right)\right]}\phi_{k}\left(\lambda_{s}^{i}\right)\mu_{i,1}\left(du\right)ds\right] \nonumber \\
&& \quad + \mathbb{E}\left[\int_0^{t\wedge \tau_m}\int_{U_0}D_{\left[g_{i,0}\left(\lambda_{s-}^{1,i}, \, u\right) - g_{i,0}\left(\lambda_{s-}^{2,i}, \, u\right)\right]}\phi_{k}\left(\lambda_{s}^{i}\right)\mu_{i,0}\left(du\right)ds\right]. \nonumber \\
\end{eqnarray}
Then, since $\lambda_{s}^{1,j}$ and $\lambda_{s}^{2,j}$ take values in $\left(0, \, m\right)$ for $s < \tau_m$, they have no jumps larger than $m$ for those values of $s$, so we can use (iii) of (4) in $\operatorname{Assum1} (\sigma_i,g_{i,0},g_{i,1},N_{i,0},N_{i,1})$ but with $g_{i, 1}\wedge m$ replaced by $g_{i, 1}$, and using also the mean value theorem and that $|\phi_{k}'(z)| \leq 1$ for all $z$ we can bound the third integral in the above estimate by $$\int_0^{t \wedge \tau_m}r_m\left(\left|\lambda_{s-}^{1,i} - \lambda_{s-}^{2,i}\right|\right)ds = \int_0^{t \wedge \tau_m}r_m\left(\left|\lambda_{s-}^{i}\right|\right)ds.$$ Thus, taking $k \longrightarrow +\infty$ and using the properties 1. and 4. of $\phi_{k}$ along with the monotone convergence theorem, the last estimate implies:
\begin{eqnarray}
\mathbb{E}\left[\left|\lambda_{t \wedge \tau_m }^{i}\right|\right] &\leq&  L\int_{0}^{t}\sum_{j=1}^N\mathbb{E}\left[\left|\lambda_{s\wedge \tau_m }^{j}\right|\right]ds + \mathbb{E}\left[\int_0^{t \wedge \tau_m}r_m\left(\left|\lambda_{s-}^{i}\right|\right)ds\right] \nonumber \\
&\leq& L\int_{0}^{t}\mathbb{E}\left[\sum_{j=1}^N\left|\lambda_{s\wedge \tau_m }^{j}\right|\right]ds + \mathbb{E}\left[\int_0^{t \wedge \tau_m}r_m\left(\sum_{j=1}^N\left|\lambda_{s- }^{j}\right|\right)ds\right] \nonumber \\
\end{eqnarray}
where the second inequality is obtained by using also the monotonicity of $r_m$. Summing over all $i \in \{1, \, 2, \, ..., \, N\}$ on the left hand side of the above and applying Gronwall's inequality we obtain
\begin{eqnarray}
\mathbb{E}\left[\sum_{j=1}^N\left|\lambda_{t \wedge \tau_m}^{j}\right|\right] &\leq& C(L,N)\mathbb{E}\left[\int_0^{t \wedge \tau_m}r_m\left(\sum_{j=1}^N\left|\lambda_{s- }^{j}\right|\right)ds\right] \nonumber \\,
\end{eqnarray}
for some constant $C(N,L)$ depending only on $L$ and $N$, so now we can conclude that 
\[\sum_{j=1}^N\left|\lambda_{t}^{j}\right| = 0\]
exactly as it is concluded that $\zeta(t) = 0$ in \cite[page 317]{FL2010}. This completes the proof of the lemma.
\end{proof}


\begin{proof}[Proof of Lemma 3.3]
We will split this proof into four different steps.

\noindent {\bf Step 1:} {\it Discretization in time of the process $b$.} For each $n \in \mathbb{N}_+$ we define $t_0^n = 0$, $b_0^n = b_0 - \frac{1}{n}$, and {recursively for $k\in\mathbb N$:}
\begin{eqnarray}
&&t_{k+1}^n = \inf\{t > t_k^n : b_{t_k^n} - \frac{1}{n} > b_t\} \wedge \left(t_k^n + \frac{1}{n}\right) \wedge T, \nonumber\\
&&b_t^n = b_{t_k^n} - \frac{1}{n}, \qquad t \in \mathopen{[\![}t_k^n, \, t_{k+1}^n \mathclose{[\![}. \nonumber 
\end{eqnarray}
Obviously, we have $b_{t}^n \leq b_{t}$ for all $t \in \left[0, \,T\right]$ and $n \in \mathbb{N}$. We also define $b_t^0 = 0$ and  $$\bar{b}_t^n = \left(1 - \frac{1}{n}\right)\times\max\{b_t^m: 0 \leq m \leq n\}$$ for $0 \leq t \leq T$, so we have $0 \leq \bar{b}_{t}^n < \bar{b}_{t}^{n+1} < b_{t}$ for all $n \in \mathbb{N} \cup \{0\}$ and $0 \leq t \leq T$. Next, we construct a similar sequence of piecewise constant processes which approximates $(b_t)_{t \in [0, T]}$ from above by defining $t_{0,n}=0$, $b_{0,n}=b_0+\frac{1}{n}$, and $$\bar{b}_{t,n} = \left(1 + \frac{1}{n}\right)\times\min\{b_{t,m}: 1 \leq m \leq n\}$$ for all $0 \leq t \leq T$ where
\begin{eqnarray}
&&t_{k+1,n}=\inf\{t>t_{k,n}\,:\,b_{t_{k,n}}+\frac{1}{n} < b_t\}\wedge\left(t_{k,n}+\frac{1}{n}\right)\wedge T, \nonumber\\
&&b_{t,n}=b_{t_{k,n}}+\frac{1}{n},\qquad t\in\mathopen{[\![}t_{k,n},t_{k+1,n}\mathclose{[\![}. \nonumber
\end{eqnarray}
In the last construction one has $\bar{b}_{t,n} > \bar{b}_{t,n+1} > b_t$ for all $n \in \mathbb{N}$ and $0 \leq t \leq T$. By definition, for any fixed {positive integer $n$ and $\omega\in\Omega$, the sequences $(t_k^n(\omega))_k$ and $(t_{k,n}(\omega))_k$} are increasing, and we have in addition the following assertion.
\bigskip

\begin{claima}
{For any fixed $\omega\in\Omega$ and $n \in \mathbb{N}$, one has $t_k^n(\omega)=T$ and $t_{k,n}(\omega)=T$ for sufficiently large $k$.
}
\end{claima}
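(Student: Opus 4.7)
The plan is to prove Claim A by contradiction: assume $t_k^n(\omega) < T$ for every $k$ and derive an inconsistency with the non-negativity (and in any event, the boundedness on $[0,T]$) of the c\`adl\`ag path $b(\omega)$.

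First, I would verify that the sequence $(t_k^n)_k$ is strictly increasing so long as $t_k^n < T$. The second and third terms in the minimum defining $t_{k+1}^n$ are both strictly greater than $t_k^n$, so I only need to check that the first term, $\inf\{t > t_k^n : b_{t_k^n} - 1/n > b_t\}$, exceeds $t_k^n$. This is where right continuity of $b$ at $t_k^n$ comes in: for $t$ slightly larger than $t_k^n$ we have $|b_t - b_{t_k^n}| < 1/n$, so $b_t > b_{t_k^n} - 1/n$, and hence no such $t$ belongs to the set in question close to $t_k^n$. Therefore $t_{k+1}^n > t_k^n$.

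Next, being strictly increasing and bounded above by $T$, the sequence converges to some $\tau \in (0, T]$, and in particular $t_{k+1}^n - t_k^n \to 0$. For all sufficiently large $k$ we then have $t_{k+1}^n - t_k^n < 1/n$ and $t_{k+1}^n < T$, so neither the $t_k^n + 1/n$ term nor the $T$ term can achieve the minimum. Hence the first term must, meaning
\begin{equation*}
t_{k+1}^n \;=\; \inf\{t > t_k^n : b_{t_k^n} - 1/n > b_t\} \;<\; \infty
\end{equation*}
and the defining set is nonempty. Choosing a sequence $s_m$ inside that set with $s_m \downarrow t_{k+1}^n$ (or noting $t_{k+1}^n$ itself lies in the set) and using right continuity of $b$, I pass to the limit in $b_{s_m} < b_{t_k^n} - 1/n$ to obtain $b_{t_{k+1}^n} \leq b_{t_k^n} - 1/n$.

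Iterating this inequality over all sufficiently large $k$ forces $b_{t_k^n} \to -\infty$, which contradicts the non-negativity of $b$. Consequently $t_k^n(\omega) = T$ for some finite $k$, and from that point on the recursion fixes the value at $T$ (since all three terms in the minimum equal $T$). The only delicate point is the passage from the infimum characterization of $t_{k+1}^n$ to the pointwise inequality $b_{t_{k+1}^n} \leq b_{t_k^n} - 1/n$, which requires the c\`adl\`ag property in an essential way; everything else is straightforward bookkeeping.
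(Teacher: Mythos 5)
Your proof is correct and follows essentially the same structure as the paper's: both derive, from the contradiction hypothesis, the key inequality $b_{t_{k+1}^n}(\omega) \leq b_{t_k^n}(\omega) - \tfrac{1}{n}$ for all large $k$ via right continuity, and then extract a contradiction from it. The one genuine difference is the final step: the paper exploits the left-limit property of the c\`adl\`ag path, letting $\ell^n(\omega)$ denote the left limit of $b_\cdot(\omega)$ at the accumulation point $t^n(\omega) = \lim_k t_k^n(\omega)$ and observing that $\ell^n(\omega) \leq \ell^n(\omega) - \tfrac{1}{n}$ is absurd, whereas you iterate the inequality to force $b_{t_k^n}(\omega) \to -\infty$, contradicting non-negativity of $b$. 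Both are valid under the lemma's hypotheses; the paper's variant is a touch more robust since it does not invoke non-negativity and would apply to any c\`adl\`ag $b$, while yours is arguably more elementary, needing only right continuity plus a lower bound. You also make explicit (via right continuity at $t_k^n$) that the sequence $(t_k^n)_k$ is strictly increasing as long as it stays below $T$, which the paper leaves implicit in the phrase ``takes infinitely many values,'' and you correctly observe that once some $t_{k_0}^n = T$ the recursion is stuck at $T$ thereafter.
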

\begin{proof}[Proof of the Claim A]
{We prove this for the sequence $(t_k^n(\omega))_k$ by contradiction. The proof for $(t_{k,n}(\omega))_k$ is almost identical (we just reverse the inequalities and replace $-\frac{1}{n}$ by $\frac{1}{n}$). Suppose that $t_k^n(\omega)$ takes infinitely many values, so then we have}  
\begin{equation}
t_{k+1}^n(\omega) = \inf\{t > t_k^n(\omega) : b_{t_k^n(\omega)}(\omega) - \frac{1}{n} > b_t(\omega)\} \nonumber
\end{equation}
for all large enough $k$. {By} the right continuity of {the process} $b$, we  also have 
\begin{equation}
b_{t_k^n(\omega)}(\omega) - \frac{1}{n} \geq b_{t_{k+1}^n(\omega)}(\omega) \nonumber
\end{equation} 
for all such $k$. Moreover, $t_k^n(\omega)$ increases to a finite limit $t^n(\omega)$ as $k \rightarrow +\infty$, and since {the function} $t\mapsto b_{t}(\omega)$ has a left limit $\ell^n(\omega)$ at $t^n(\omega)$, we  have
\begin{eqnarray}
\ell^n(\omega) = \lim_{k \rightarrow +\infty}b_{t_{k+1}^n(\omega)}(\omega) \nonumber \leq \lim_{k \rightarrow +\infty}b_{t_{k}^n(\omega)}(\omega) - \frac{1}{n} = \ell^n(\omega) - \frac{1}{n} \nonumber
\end{eqnarray}
which is a contradiction. {Therefore, $t_k^n(\omega)$ only takes finitely many values in $[0,T]$ when $k$ varies. In particular, there exists $\ell^n(\omega)\in[0,T]$ and $k_0\in\mathbb N$ such that $t_k^n(\omega)=\ell^n(\omega)$ for any $k\in\mathbb N$ with $k\geq k_0$. Note that $\ell^n(\omega)$ should equal $T$ since otherwise by the right continuity of the process $b$ we would have $t_{k_0+1}^n(\omega)>t_{k_0}^n(\omega)$, which leads again to a contradiction.
} 

\end{proof}

\noindent{\bf Step 2.} {\it Resolution of the equation with discretized drift coefficients.} {Note that} $t_k^n$ is a stopping time for each $n$ and each $k$, and if we define $\bar{t}_k^n$ to be the $k^{\text{th}}$ smallest element of the set $\{t_k^m: \, k \in \mathbb{N}, \, m \in \{1,2,\ldots,n\}\}$, then for any $n$, $\{\bar{t}_k^n\}_{k \in \mathbb{N}}$ is an increasing sequence of stopping times, with $\bar{b}_t^n$ being constant {on each stochastic interval of the form $\mathopen{[\![}\bar{t}_k^n, \,\bar{t}_{k+1}^n\mathclose{[\![}$}. {Moreover, we obtain by Claim A that, for each fixed $\omega\in\Omega$, $\bar{t}_k^n(\omega)=T$} for all large enough $k$. Assuming that we can find a non-negative semimartingale $\left(Y_t^n\right)_{t \in \mathopen{[\![}0, \, \bar{t}_k^n\mathclose{]\!]}}$ satisfying the SDE
\begin{equation}
\begin{split}
Y_t^n = Y_0 &+ a\int_{0}^{t}\left(\bar{b}_s^n - Y_s^n\right)ds + \int_0^t\sigma(Y_s^n)dW_s \\
& + \int_0^{t}\int_{U_1}g_{1}\left(Y_{s-}^n, \, u\right)N_{1}\left(ds, \, du\right)  + \int_0^{t}\int_{U_0}g_{0}\left(Y_{s-}^n, \, u\right)\tilde{N}_{0}\left(ds, \, du\right)\label{auxSDEapprox}
\end{split}
\end{equation}
{on the stochastic interval $\mathopen{[\![}0, \bar{t}_k^n\mathclose{]\!]}$,} we claim that we can extend the solution to the {stochastic interval $\mathopen{[\![}0,  \bar{t}_{k+1}^n\mathclose{]\!]}$.} Indeed, we only need to find a non-negative solution to the SDE
\begin{equation}
\begin{split}
Y_t^n = Y_{\bar{t}_k^n}^n &+ a\int_{\bar{t}_k^n}^{t}\left(\bar{b}_s^n - Y_s^n\right)ds + \int_{\bar{t}_k^n}^t\sigma(Y_s^n)dW_s \\& + \int_{\bar{t}_k^n}^{t}\int_{U_1}g_{1}\left(Y_{s-}^n, \, u\right)N_{1}\left(ds, \, du\right) + \int_{\bar{t}_k^n}^{t}\int_{U_0}g_{0}\left(Y_{s-}^n, \, u\right)\tilde{N}_{0}\left(ds, \, du\right)\label{auxSDEapprox2}
\end{split}
\end{equation}
{on $\mathopen{]\!]}\bar{t}_k^n, \bar{t}_{k+1}^n\mathclose{]\!]}$} given $\mathcal{F}_{\bar{t}_k^n}$, in which case $\bar{t}_k^n$, $Y_{\bar{t}_k^n}^n$ and $\bar{b}_s^n = \bar{b}_{\bar{t}_k^n}^n \geq 0$ are known constants and $\bar{t}_{k+1}^n$ is a stopping time. This is possible by recalling Theorem 5.1 from \cite{FL2010} (see also Remark~\ref{assumrm2} regarding the verification of the required conditions) to solve
\begin{equation}
\begin{split}
Y_t^n = Y_{\bar{t}_k^n}^n& + a\int_{\bar{t}_k^n}^{t}\left(\bar{b}_{\bar{t}_k^n}^n - Y_{s}^n\right)ds + \int_{\bar{t}_k^n}^t\sigma(Y_s^n)dW_s \\& + \int_{\bar{t}_k^n}^{t}\int_{U_1}g_{1}\left(Y_{s-}^n, \, u\right)N_{1}\left(ds, \, du\right) + \int_{\bar{t}_k^n}^{t}\int_{U_0}g_{0}\left(Y_{s-}^n, \, u\right)\tilde{N}_{0}\left(ds, \, du\right)\label{auxSDEapprox3}
\end{split}
\end{equation}
{on $\mathopen{]\!]}\bar{t}_k^n, T\mathclose{]\!]}$} given $\mathcal{F}_{\bar{t}_k^n}$, and then stopping at time $\bar{t}_{k+1}^n$. This inductive argument defines a non-negative c\`adl\`ag semimartingale $Y_{\cdot}^n$ which solves {the equation $\text{\eqref{auxSDE}}_{\bar b^n}$ on} $\left[0, T\right]$. 
By the same argument, we obtain that the equation $\text{\eqref{auxSDE}}_{\bar{b}_{\cdot, n}}$ with the piecewise constant process $\bar{b}_{\cdot, n}$ in the drift admits also a solution, which we denote by $Y_{\cdot, n}$ for each $n$.
\bigskip

\noindent
{\bf Step 3.} {\it Convergence of the  drift coefficients and associated solutions.} We begin with the following claim. 
\begin{claimb}
The sequences $(Y^n)_{n\geq 1}$ and $(Y_{\cdot,n})_{n\geq 1}$ defined in Step 2 converge pointwise to two $\mathbb F$-adapted processes $Y$ and $\tilde{Y}$, increasingly from below and decreasingly from above respectively. 
\end{claimb}
\begin{proof}[Proof of Claim B]
{By} construction we have $$\bar{b}_{t}^{n} > \bar{b}_{t}^{n+1} > b_t > \bar{b}_{t}^{n+1} > \bar{b}_{t}^{n} \geq 0$$ for all $t \in \left[0, \,T\right]$ and $n \in \mathbb{N}$, and the processes $(\bar{b}_{t}^{n})_{t\in[0,T]}$, $(\bar{b}_{t}^{n+1})_{t\in[0,T]}$, $(\bar{b}_{t,n})_{t\in[0,T]}$ and $(\bar{b}_{t, n+1})_{t\in[0,T]}$ are constant on the stochastic intervals $\mathclose{[\![}\bar{t}_k^n, \bar{t}_{k+1}^n\mathopen{[\![}$, $\mathclose{[\![}\bar{t}_k^{n+1}, \bar{t}_{k+1}^{n+1}\mathopen{[\![}$, $\mathclose{[\![}\bar{t}_{k,n}, \bar{t}_{k+1, n}\mathopen{[\![}$ and $\mathclose{[\![}\bar{t}_{k,n+1}, \bar{t}_{k+1,n+1}\mathopen{[\![}$ respectively for every $k$. Then, denoting by $\tau_{1}^n$ the smallest time in the set $\cup_{k \geq 1}\{\bar{t}_k^n, \, \bar{t}_k^{n+1}, \, \bar{t}_{k,n}, \, \bar{t}_{k,n+1}\}$, by $\tau_{2}^n$ the second smallest and so on, we obtain an increasing sequence of stopping times $\{\tau_k^n\}_{k=1}^{\infty}$, with the processes $(\bar{b}_{t}^{n})_{t\in[0,T]}$, $(\bar{b}_{t}^{n+1})_{t\in[0,T]}$, $(\bar{b}_{t,n})_{t\in[0,T]}$ and $(\bar{b}_{t,n+1})_{t\in[0,T]}$ being constant on each $\mathclose{[\![}\tau_k^n, \tau_{k+1}^n\mathopen{[\![}$. Moreover, making all these processes constant on the interval $\mathclose{[\![}\tau_k^n, T\mathclose{]\!]}$ by ignoring the jumps they admit after $\tau_{k}^n$, the solutions $(Y_t^n)_{t \in [0, T]}$, $(Y_t^{n+1})_{t \in [0, T]}$, $(Y_{t,n})_{t \in [0, T]}$ and $(Y_{t,n+1})_{t \in [0, T]}$ to $\text{\eqref{auxSDE}}_{\bar b^n}$, $\text{\eqref{auxSDE}}_{\bar b^{n+1}}$, $\text{\eqref{auxSDE}}_{\bar b_{\cdot,n}}$ and $\text{\eqref{auxSDE}}_{\bar b_{\cdot,n+1}}$ respectively remain unchanged on the closed stochastic interval $\mathclose{[\![}\tau_k^n, \tau_{k+1}^n\mathclose{]\!]}$, so applying the comparison theorem of Gal’chuk \cite[Theorem 1]{Y1986} on $\mathclose{[\![}\tau_k^n, T\mathclose{]\!]}$ given $F_{\tau_k^n}$ (see Remark~\ref{assumrm3} regarding the verification of all the required conditions) we get that the ordering $$Y_{\tau_k^{n}, n} \geq Y_{\tau_k^n, n+1} \geq Y_{\tau_k^{n}}^{n+1} \geq Y_{\tau_k^n}^n$$ implies the ordering 
\begin{equation*}
Y_{t, n} \geq Y_{t, n+1} \geq Y_{t}^{n+1} \geq Y_{t}^n
\end{equation*}
for all $t \in \mathclose{[\![}\tau_k^n, \tau_{k+1}^n\mathclose{]\!]}$. Therefore, we can extend the last quadruple inequality from each stochastic interval $\left[0, \,\tau_k^n\right]$ to the next one $\left[0, \,\tau_{k+1}^n\right]$, so an inductive argument shows that 
\begin{equation}\label{quadineq}
Y_{t, n} \geq Y_{t, n+1} \geq Y_{t}^{n+1} \geq Y_{t}^n \geq 0
\end{equation}
holds for all $t \in \left[0, \,T\right]$ and $n \in \mathbb{N}$.
Therefore, the sequences $(Y_t^n)_{n\geq 1}$ and $(Y_{t,n})_{n\geq 1}$ are both monotone and bounded, which means that they converge for each $t \in [0, T]$, defining respectively the limiting processes $(Y_t)_{t \in [0, T]}$ and $(\tilde{Y}_t)_{t \in [0, T]}$ which are clearly $\mathbb F$-adapted and satisfy
\begin{equation}\label{squeeze}
Y_{t, 1} \geq \tilde{Y}_{t} \geq Y_{t} \geq Y_{t}^1 \geq 0
\end{equation}
for all $t \in [0, T]$.
\end{proof}

\noindent {We now show that for any $\omega\in\Omega$ and any continuity point $t$ of the function $s\mapsto b_s(\omega)$, the sequences $(\bar{b}_{t}^n(\omega))_{n \geq 1}$ and $(\bar{b}_{t,n}(\omega))_{n \geq 1}$ converge to $b_{t}(\omega)$ from below and above respectively as $n \rightarrow +\infty$. Will will only prove this for $(\bar{b}_{t}^n(\omega))_{n \geq 1}$ as the proof for $(\bar{b}_{t,n}(\omega))_{n \geq 1}$ is identical (we only need to replace $b_t^n(\omega)$ with $b_{t,n}(\omega)$, $\bar{b}_t^n(\omega)$ with $\bar{b}_{t,n}(\omega)$ and $(t_k^n)_{k,n}$ with $(t_{k,n})_{k,n}$ and reverse \eqref{bsordering} in the following argument, and also replace $-\frac{1}{n}$ with $\frac{1}{n}$ in \eqref{convbt} and \eqref{bsordering} below). For any any positive integer $n$,} there exists a $k(n) \in \mathbb{N}$ such that $t \in \mathopen{[\![}t_{k(n)}^n(\omega),  t_{k(n)+1}^n(\omega)\mathclose{[\![}$ and thus 
\begin{eqnarray}
|t - t_{k(n)}^n(\omega)| \leq |t_{k(n)+1}^n(\omega) - t_{k(n)}^n(\omega)| \leq \frac{1}{n},
\end{eqnarray}
which means that $t_{k(n)}^n(\omega) \rightarrow t$ from below as $n \rightarrow +\infty$. Hence, by the continuity of {$b_{\cdot}(\omega)$} at $t$ and the definition of $b_{\cdot}^n$, we have 
\begin{equation}\label{convbt}
b_t^n(\omega) = b_{t_{k(n)}^n(\omega)}(\omega) - \frac{1}{n} \longrightarrow b_{t}(\omega)\text{ as $n \rightarrow +\infty$}.
\end{equation} Recalling then that 
\begin{equation}\label{bsordering}
b_t(\omega) \geq \bar{b}_t^n(\omega) \geq \left(1 - \frac{1}{n}\right)\times b_t^n(\omega)
\end{equation}
for all $n \in \mathbb{N}$, we deduce that $\bar{b}_t^n(\omega) \rightarrow b_t(\omega)$ as $n \rightarrow +\infty$.
\bigskip

\noindent {\bf Step 4.} {\it Resolution of the initial equation via monotone approximations.}
Finally, we will show that the processes $Y$ and $\tilde{Y}$ solve \eqref{auxSDE} in $\left[0, \, T\right]$, which means that they will coincide by Theorem~\ref{Uniqthm}, and we will also establish the comparison property. Once again, we only obtain our SDE for $Y$, as the proof for $\tilde{Y}$ is identical (we just reverse the monotonicity). {For $s\in[0,T]$, we denote by $Y_{s-}$ the limit of the increasing sequence $(Y_{s-}^n)_{n\in\mathbb N}$.} 
We start by recalling the monotone convergence theorem which gives
\begin{eqnarray}
\int_{0}^{t}\left(\bar{b}_s^n - Y_s^n\right)ds &=& \int_{0}^{t}\bar{b}_s^nds - \int_{0}^{t}Y_s^nds \nonumber \\
&\longrightarrow& \int_{0}^{t}b_sds - \int_{0}^{t}Y_sds = \int_{0}^{t}\left(b_s - Y_s\right)ds
\end{eqnarray}
as $n \longrightarrow +\infty$, uniformly in $t \in \left[0, \, T\right]$. Next, we pick a sequence $\{\tau^{m}\}_{m \in \mathbb{N}}$ of $\mathbb{F}$-stopping times such that $\displaystyle{\lim_{m \longrightarrow +\infty}\tau^{m} = +\infty}$ and also
\begin{eqnarray}
&&\int_0^{T \wedge \tau^{m}}\left(\sigma\left(Y_s^n\right) - \sigma\left(Y_s\right)\right)^2ds \leq m, \nonumber \\
&& \int_0^{T \wedge \tau^{m}}\int_{U_0}\left(g_{0}\left(Y_{s-}^n, \, u\right) - g_{0}\left(Y_{s-}, \, u\right)\right)^2\mu_{0}\left(du\right)ds \leq m, \nonumber \\
&& \int_0^{T \wedge \tau^{m}}\int_{U_1}\left(g_{1}\left(Y_{s-}^n, \, u\right) - g_{1}\left(Y_{s-}, \, u\right)\right)^2\mu_{1}\left(du\right)ds \leq m
\end{eqnarray}
and 
\begin{eqnarray}
\int_0^{T \wedge \tau^{m}}\int_{U_1}\left|g_{1}\left(Y_{s-}^n, \, u\right) - g_{1}\left(Y_{s-}, \, u\right)\right|\mu_{1}\left(du\right)ds \leq m
\end{eqnarray}
for all $m \in \mathbb{N}$ and $n \in \mathbb{N}$. The last is possible by using (ii) of (4) in $\operatorname{Assum1}(\sigma,g_0,g_1,N_0,N_1)$ (combined with (ii) of (3) for controlling the fourth integral) and the monotonicity of both the function $\rho(\cdot)$ and the sequence $Y^n$. Then, by using the Burkholder-Davis-Gundy inequality (see \cite{COH}) we have 
\begin{eqnarray}
&&\mathbb{E}\left[\left(\sup_{t \in \left[0, \, T\right]}\left|\int_0^{t \wedge \tau^m}\sigma\left(Y_s^n\right)dW_s - \int_0^{t \wedge \tau^m}\sigma\left(Y_s\right)dW_s \right|\right)^2\right] \nonumber \\
&& \qquad = \mathbb{E}\left[\left(\sup_{t \in \left[0, \, T\right]}\left|\int_0^{t \wedge \tau^m}\left(\sigma\left(Y_s^n\right) - \sigma\left(Y_s\right)\right)dW_s\right| \right)^2\right] \nonumber \\
&& \qquad \leq C\mathbb{E}\left[\int_0^{T \wedge \tau^m}\left(\sigma\left(Y_s^n\right) - \sigma\left(Y_s\right)\right)^2ds\right] \nonumber
\end{eqnarray}
where we can recall the continuity of $\sigma$ and either the monotone convergence theorem or the dominated convergence theorem (depending on whether $\sigma$ is bounded or increasing) to deduce that the RHS tends to zero as $n \longrightarrow +\infty$. Next, writing $\tilde{N}_{1}\left(ds,du\right)$ for the compensated measure $N_{1}\left(ds,du\right) - \mu_{1}\left(du\right)ds$, where $\mu_{1}\left(du\right)ds$ is the compensator of $N_{1}\left(ds,du\right)$, by using the Burkholder-Davis-Gundy inequality once more we have
\begin{eqnarray}
&&\mathbb{E}\Bigg[\Bigg(\sup_{t \in \left[0, \, T\right]}\Bigg|\int_0^{t \wedge \tau^m}\int_{U_1}g_{1}\left(Y_{s-}^n, \, u\right)\tilde{N}_{1}\left(ds, \, du\right) \nonumber \\
&& \qquad \qquad \qquad - \int_0^{t \wedge \tau^m}\int_{U_1}g_{1}\left(Y_{s-}, \, u\right)\tilde{N}_{1}\left(ds, \, du\right) \Bigg|\Bigg)^2\Bigg] \nonumber \\
&& \qquad  = \mathbb{E}\Bigg[\Bigg(\sup_{t \in \left[0, \, T\right]}\Bigg|\int_0^{t \wedge \tau^m}\int_{U_1}\left(g_{1}\left(Y_{s-}^n, \, u\right) - g_{1}\left(Y_{s-}, \, u\right)\right)\tilde{N}_{1}\left(ds, \, du\right)\Bigg| \Bigg)^2\Bigg] \nonumber \\
&& \qquad  \leq C\mathbb{E}\Bigg[\int_0^{T \wedge \tau^m}\int_{U_1}\left(g_{1}\left(Y_{s}^n, \, u\right) - g_{1}\left(Y_{s}, \, u\right)\right)^2\mu_{1}\left(du\right)ds\Bigg] \nonumber
\end{eqnarray}
where the quantity $\left(g_{1}\left(Y_{s}^n, \, u\right) - g_{1}\left(Y_{s}, \, u\right)\right)^2$ decreases to zero for $\mu_1$ - almost all $u$ (by the monotonicity of $g_1$ and (5) of $\operatorname{Assum1}(\sigma,g_0,g_1,N_0,N_1)$), so by monotone convergence we get that the RHS of the above tends also to zero as $n \longrightarrow +\infty$. Moreover, by the monotonicity of $g_1$, we can work as for the convergence of the drifts to obtain
\begin{eqnarray}
\int_0^{t \wedge \tau^m}\int_{U_1}g_{1}\left(Y_{s}^n, \, u\right)\mu_{1}\left(du\right)ds \longrightarrow \int_0^{t \wedge \tau^m}\int_{U_1}g_{1}\left(Y_{s}, \, u\right)\mu_{1}\left(du\right)ds
\end{eqnarray}
uniformly in $t \in \left[0, \, T\right]$ as $n \longrightarrow +\infty$, and combining this with the previous convergence result we deduce that almost surely we have
\begin{eqnarray}
\int_0^{t \wedge \tau^m}\int_{U_1}g_{1}\left(Y_{s-}^n, \, u\right)N_{1}\left(ds, \, du\right) \longrightarrow \int_0^{t \wedge \tau^m}\int_{U_1}g_{1}\left(Y_{s-}, \, u\right)N_{1}\left(ds, \, du\right) 
\end{eqnarray}
uniformly in $t \in \left[0, \, T\right]$ (in a subsequence). Finally, using the Burkholder-Davis-Gundy inequality and the monotone convergence theorem as we did for the integral with respect to $\tilde{N}_{1}\left(ds, \, du\right)$, we find that 
\begin{eqnarray}
\mathbb{E}\Bigg[\bigg(\sup_{t \in \left[0, \, T\right]}\bigg|\int_0^{t \wedge \tau^m}\int_{U_0}g_{0}\left(Y_{s-}^n, \, u\right)\tilde{N}_{0}\left(ds, \, du\right)  - \int_0^{t \wedge \tau^m}\int_{U_0}g_{0}\left(Y_{s-}, \, u\right)\tilde{N}_{0}\left(ds, \, du\right) \bigg|\bigg)^2\Bigg] \nonumber
\end{eqnarray}
tends also to zero as $n \longrightarrow +\infty$. It follows that almost surely, we can take limits on both sides of \eqref{auxSDEapprox} and obtain $\eqref{auxSDE}_{b}$ with $t$ is replaced by $t \wedge \tau^m$, for any $t \in \left[0, \, T\right]$, where $(Y_{t-})_{t \in [0, T]}$ is the actual left limit process of $(Y_{t})_{t \in [0, T]}$ since the latter is the uniform limit of $(Y_{t}^{n})_{t \in [0, T]}$ as $n \longrightarrow +\infty$. Then, we can get rid of $\tau^m$ by letting $m \longrightarrow +\infty$. \bigskip

\noindent We finish this proof by establishing the comparison property of solutions. As we have already mentioned, replacing $Y$ with $\tilde{Y}$ and $\bar{b}^n$ with $\bar{b}_{\cdot, n}$ in the above arguments, one finds that $\tilde{Y}$ is also a solution to \eqref{auxSDE}, so by Lemma~\ref{Uniqthm} we get $Y \equiv \tilde{Y}$ which is approximated by $(Y^n)_{n \geq 1}$ and $(Y_{\cdot, n})_{n \geq 1}$ from below and from above respectively. Let now $(Y_t^1)_{t \in [0, T]}$ and $(Y_t^2)_{t \in [0, T]}$ be solutions to $\text{\eqref{auxSDE}}_{\bar b^1}$ and $\text{\eqref{auxSDE}}_{\bar b^2}$ respectively with $Y_0^1 \geq Y_0^2$ and $b_t^1 \geq b_t^2$ for all $t$. We construct the corresponding sequences of piecewise constant $\mathbb{F}$-adapted c\`adl\`ag processes $(\bar{b}_{\cdot, n}^1)_{n \geq 1}$ and $(\bar{b}^{n,2})_{n \geq 1}$, which approximate $b^1$ and $b^2$ from above and below respectively, and we consider the solutions $Y_{\cdot, n}^{1}$ and $Y^{2, n}$ to $\text{\eqref{auxSDE}}_{\bar{b}_{\cdot, n}^1}$ and $\text{\eqref{auxSDE}}_{\bar{b}^{n,2}}$ respectively, with initial values $Y_0^1$ and $Y_0^2$ respectively. Next, for each $n \in \mathbb{N}$, we pick a sequence $(\tau^{k,n})_{k \geq 0}$ of stopping times with $\tau^{0,n} = 0$ and with both processes $(\bar{b}_{t, n}^1)_{t \in [0, T]}$ and $(\bar{b}_t^{n,2})_{t \in [0, T]}$ being constant on each stochastic interval $\mathclose{[\![}\tau^{k,n}, \tau^{k+1,n}\mathopen{[\![}$. Then, we have $\bar{b}_{t, n}^1 > b_t^1 \geq b_t^2 > \bar{b}_t^{n,2}$ for every $t$, so we can apply the comparison theorem of Gal’chuk on each stochastic interval (as we did in the proof of Claim B for the SDEs $\text{\eqref{auxSDE}}_{\bar b^n}$, $\text{\eqref{auxSDE}}_{\bar b^{n+1}}$, $\text{\eqref{auxSDE}}_{\bar b_{\cdot,n}}$ and $\text{\eqref{auxSDE}}_{\bar b_{\cdot,n+1}}$) to deduce that $Y_{t,n}^1 \geq Y_t^{2,n}$ for all $t \in [0, T]$, almost surely. Taking $n \to +\infty$ in the last inequality we conclude that $Y_{t}^1 \geq Y_t^{2}$ for all $t \in [0, T]$, almost surely, since $Y_{t,n}^1 \longrightarrow Y_{t}^1$ and $Y_t^{2,n} \longrightarrow Y_t^{2}$ as $n \longrightarrow +\infty$.
\end{proof}

{\flushleft{\textbf{Acknowledgement}}\\[.1in]
The second-named author's work was supported financially by the Boya Postdoctoral Fellowship of Peking University, and by the Beijing International Center for Mathematical Research (BICMR).
}


\begin{thebibliography}{10}

\bibitem{AM2018}{Abdelghani, M. and Melnikov, A.} {A comparison theorem for stochastic equations of optional semimartingales}, \emph{Stochastics and  Dynamics},
{\bf{18}(4)}, (2018), {1850029, 21}.

\bibitem{BASS} Bass, F. R. Stochastic differential equations driven by
symmetric stable processes. \emph{Séminaire de probabilités de Strasbourg},  {\bf 36} (2002), 302--313

\bibitem{BC2015} Bo, L. and Capponi, A. Systemic risk in interbanking networks. \emph{SIAM Journal on Financial Mathematics}, {\bf 6(1)} (2015), 386--424

\bibitem{BHHJR}
Bush, N.; Hambly, B.M.; Haworth, H.; Jin, L., and Reisinger, C. Stochastic evolution equations in portfolio credit modelling, \emph{SIAM J. Financial Math.}, {\bf 2} (2011), 627--664.


              
\bibitem{COH} Cohen, S. N. and Elliot, R. J. \emph{Stochastic Calculus and Applications}. Second edition. Probability and Its Applications. Birkhauser, New York, NY (2015).

\bibitem{DL2006} Dawson, D. A. and Li, Z. Skew convolution semigroups and affine {M}arkov processes. \emph{The Annals of Probability},  {\bf 34(3)} (2006), 1103--1142



\bibitem{Fil2001}
Filipovi\'c, D. A general characterization of one factor affine term structure models. \emph{Finance and Stochastics} {\bf 5(3)} (2001),
389--412. 


\bibitem{FI2013} Fouque, J. P. and Ichiba, T. Stability in a model of interbank lending. \emph{SIAM Journal on Financial Mathematics},  {\bf 4(1)} (2013), 784--803. 


\bibitem{FOU} Fouque, J. P.; Papanicolaou, G. and Sircar, K.R. Derivatives in financial markets with stochastic volatility. c2000 | Cambridge: Cambridge University Press | xiv, 201 p. ; 24 cm.


\bibitem{FL2021} Frikha, N. and Li, L. Well-posedness and approximation of some one-dimensional
{L}\'{e}vy-driven non-linear {SDE}s. \emph{Stochastic Processes and their Applications}, {\bf 132} (2021), 76--107.

\bibitem{FL2010}
Fu, Z. and Li, Z. Stochastic equations of non-negative processes with jumps.
\emph{Stochastic Processes and their Applications},
{\bf 120(3)} (2010), 306--330.


\bibitem{Y1986} Gal’chuk, L. I. A Comparison Theorem for Stochastic Equations with Integrals with Respect to Martingales and Random Measures. \emph{Theory of Probability and Its Applications}, {\bf 27(3)} (1983), 450--460.


\bibitem{GSS2013} Giesecke, K.; Spiliopoulos, K and Sowers,
R. B. Default clustering in large portfolios: typical events. \emph{The Annals of Applied Probability}, {\bf 23(1)} (2013), 348--385.
              
\bibitem{HK2017} Hambly, B. and Kolliopoulos, N. Stochastic evolution equations for large portfolios of
stochastic volatility models. \emph{SIAM Journal on Financial Mathematics}, {\bf 8(1)} (2017), 962--1014.
              
\bibitem{ERR} Hambly, B. and Kolliopoulos, N. Erratum: Stochastic evolution equations for large portfolios of stochastic volatility models. \emph{SIAM Journal on Financial Mathematics}, {\bf 10} (2019), 857--876. 

\bibitem{HK2019} Hambly, B and Kolliopoulos, N. Fast mean-reversion asymptotics for large portfolios of stochastic volatility models. \emph{Finance and Stochastics} {\bf 24(3)} (2020), 757--794.

\bibitem{IW} Ikeda, N. and Watanabe, S. \emph{Stochastic Differential Equations and Diffusion Processes}. Second edition. North-Holland/Kodasha, Amsterdam/Tokyo (1989)
              
\bibitem{JCS2017} Jiao, Y.; Ma, C. and Scotti, S. Alpha-{CIR} model with branching processes in sovereign interest rate modeling. \emph{Finance and Stochastics}, {\bf 21(3)} (2017), 789--813.

\bibitem{JCSZ2021} Jiao, Y.; Ma, C.; Scotti, S. and Zhou, C.  The Alpha-Heston stochastic volatility model. \emph{Mathematical Finance}, {\bf 31(3)} (2021), 943--978.

\bibitem{Kunita}Kunita, H. Representation of Martingales with Jumps and Applications to Mathematical Finance. \emph{Stochastic Analysis and Related Topics}. {\bf 41} (2004), 209--232.
              
              
\bibitem{LM2011} Li, Z. and Mytnik, L. Strong solutions for stochastic differential equations with jumps. \emph{Annales de l'Institut Henri Poincar\'{e} Probabilit\'{e}s et
              Statistiques}, {\bf 47(4)} (2011), 1055--1067.


\end{thebibliography}
\end{document}